\newcommand{\ee}{{\mathrm e}}
\newcommand{\ii}{{\mathrm i}}
\newcommand{\ad}{\mathrm{ad}}
\newcommand{\dexp}{\mathrm{dexp}}
\def\OO#1{{\cal O}\left(#1\right)}
\newcommand{\dx}{\partial_x}
\newcommand{\ve}{\varepsilon}
\newcommand{\schr}{Schr\"{o}dinger }
\newcommand{\Ang}[2]{\left\langle #2 \right\rangle_{#1}}
\newcommand{\AAA}{\mathcal{A}}
\newcommand{\BBB}{\mathcal{B}}
\newcommand{\CCC}{\mathcal{C}}
\newcommand{\DDD}{\mathcal{D}}
\newcommand{\III}{\mathcal{I}}
\newcommand{\KKK}{\mathcal{K}}
\newcommand{\MMM}{\mathcal{M}}
\newcommand{\PPP}{\mathcal{P}}
\newcommand{\SSS}{\mathcal{S}}
\newcommand{\TTT}{\mathcal{T}}
\newcommand{\ang}[2]{\left\langle #1 \right\rangle_{#2}}
\newtheorem{defn}{Definition}
\numberwithin{equation}{section}
\newtheorem{cor}[theorem]{Corollary}
\begin{document}
\title{Algebraic theory for higher-order methods in computational quantum mechanics}

\author{Pranav Singh\footnote{Department of Applied Mathematics and Theoretical Physics, University of Cambridge, Wilberforce Rd, Cambridge CB3 0WA, UK. E-mail: {\tt singh.pranav@gmail.com}, Tel: {\tt +44 (0)1223 337892}, Fax: {\tt +44 (0)1223 765900}.}}
\maketitle

\vspace*{6pt}
\noindent   \textbf{Keywords:} Free Lie algebras, derivations, Jordan products, Lie group methods, exponential splittings, Zassenhaus splitting, Magnus expansion, semiclassical Schr\"odinger equations

\begin{abstract}
We present the algebraic foundations of the {\em symmetric Zassenhaus algorithm} and some of its variants. These algorithms have proven effective in
devising higher-order methods for solving the time-dependent \schr equation in the semiclassical regime.
We find that the favourable properties of these methods derive
directly from the structural properties of a $\mathbb{Z}_2$-graded Lie algebra. Commutators in this Lie algebra can be simplified explicitly, leading to
commutator-free methods.
Their other structural properties are crucial in proving
unitarity, stability, convergence, error bounds and quadratic costs of Zassenhaus based methods. These algebraic structures have also found applications in Magnus expansion
based methods for time-varying potentials where they allow significantly milder constraints for convergence and lead to highly effective schemes.
The algebraic foundations laid out in this work
pave the way for extending higher-order Zassenhaus and Magnus schemes to other equations of quantum mechanics.
\end{abstract}

\setcounter{section}{0}
\section{Introduction}
Recently devised symmetric Zassenhaus splittings \cite{bader14eaf}  and Magnus--Zassenhaus methods \cite{IKS,MKO} for computationally solving semiclassical time-dependent \schr equations (TDSEs)
have proven highly effective in achieving
arbitrarily high orders of accuracy with costs growing quadratically in the desired order.
This is in contrast to Yoshida based methods \cite{yoshida}, whose costs grow exponentially with order. These new schemes are commutator-free and allow much larger time steps than otherwise thought possible
for convergence of Magnus and Zassenhaus expansions.

Underlying the effectiveness of these new methods are some algebraic structures which form the crux of this work. In a break from tradition in numerical solutions of partial differential equations, the methods of \cite{bader14eaf} are devised by working directly in the \emph{free Lie algebra} of undiscretised operators $\dx^2$ and $V(x)$. The quadratic costs of symmetric Zassenhaus methods are achievable due to the property of \emph{height reduction} in these Lie algebras. This property  entirely fails to materialise when working with matrices corresponding to the discretisation of $\dx^2$ and $V(x)$. Hence the structure of these free Lie algebras is crucial to the effectiveness of Zassenhaus schemes.

The characterisation of these algebras forms a major motivation of this work. It is this characterisation that will allow us to formally prove the quadratic growth in costs. We will also find that these algebras possess a $\mathbb{Z}_2$-graded structure that, along with the symmetry of its constituents, proves crucial for the numerical stability as well as the unitary evolution of the wave-packet.
Instead of characterising the free Lie algebra,
\[ \mathrm{FLA}(\{V(x), \dx^2\}), \]
however, we will attempt the characterisation of a bigger free Lie algebra,
\[ \mathrm{FLA}(\SSS \cup \PPP(\dx)), \]
generated by all functions in the function space $\SSS$ and all polynomials of the differential operator $\dx$.

These algebraic characterisations will be introduced in a more abstract form---in the context of an associative algebra $\AAA$ with a commutative subalgebra $\CCC$ and its Lie idealiser $\III$. The algebraic structures of the Zassenhaus
schemes for the TDSE are seen to be special cases of the Lie algebras introduced here.
To be more concrete, the associative algebra $\AAA$ in the case of the TDSE will consist of linear operators, the commutative subalgebra will be formed by multiplicative operators (whose action is multiplication by $V$, for instance), and $\dx$ will be seen to be an element of the Lie idealiser  $\III$. The properties of these Zassenhaus schemes are traced to the structural properties of these algebras.

This algebraic formulation will prove of crucial importance in cutting the Zassenhaus algorithms free of their semiclassical TDSE origins, allowing their
use in devising higher-order computational methods for other equations of quantum mechanics and possibly beyond.

In Section~\ref{sec:defn} we introduce the abstract context in which our algebra $\mathfrak{F}$ is defined. In Section~\ref{sec:ass}, we show that $\mathfrak{F}$ is
an associative algebra and study its structure. While it follows immediately that $\mathfrak{F}$ is also a Lie algebra, in Section~\ref{sec:lie} we find out
that it possesses a very interesting structure which has significant ramifications for numerical methods discussed in Section~\ref{sec:app}.

The tables of coefficients in Appendix~\ref{app:tables} should aid direct applications of the results of this paper. The lengthier proofs of Lemma~\ref{lem:explicit_pi} and Lemma~\ref{lem:genfun_pi} from Section~\ref{sec:ass} have been confined to Appendices \ref{app:explicit_pi} and \ref{app:genfun_pi}, respectively, for ease of narration.

\section{Some notations and definitions}
\label{sec:defn}
Consider a commutative algebra $\CCC$ which is a subalgebra of the unital associative algebra $(\AAA,\cdot,+)$ over the field of rational numbers $\mathbb{Q}$. The commutator on the associative product,
    \[ [a,b] = a \cdot b - b \cdot a, \]
acts as the canonical Lie product while the anticommutator,
    \[ a \bullet b = \frac12 \left(a \cdot b + b \cdot a \right),\]
acts as a Jordan product.
$\AAA$ along with the Lie (Jordan) product forms a Lie (Jordan) algebra which we can identify with $\AAA$ again. The Lie idealiser of $\CCC$ in $\AAA$,
    \[ \III = \{ d \in \AAA\ :\ [d,\CCC] \subseteq \CCC \}. \]
is the largest subalgebra of $\AAA$ in which $\CCC$ is a Lie ideal.

In this work we typically use $a,b,c$ for elements of the associative algebra $\AAA$, $x,y,z$ for elements of the commutative subalgebra $\CCC$ and reserve $d$ for elements of the Lie idealiser $\III$, except where it is clear from context. For elements $x,y$ of the commutative algebra $\CCC$, we write $xy$ to denote $x \cdot y$, dropping the explicit use of the multiplication operator.
The letters $k,l,m,n,p,r,s,i$ are used for non-negative integers.

Consider the adjoint map, where the action of $ad_a$ for $a \in \AAA$ is described by
    \[ \ad_a(b) = [ a , b ] .\]
$\ad_a$ is always a derivation on $\AAA$ and is called an inner derivation. However, it need not be a derivation on $\CCC$ in general. For $d\in \III$, $\ad_d$ is also a derivation on $\CCC$,
    \[\ad_d \in \mathrm{Der}(\CCC), \qquad d \in \III.\]

\begin{defn}
    \label{def:ang}
    For $x \in \CCC, d\in \III$ and a non-negative integer $k$ we introduce the notation
    \[ \ang{x}{k}^d = x \bullet d^k = \frac12 (x \cdot d^k + d^k \cdot x), \]
    where the power $d^k$ is defined as usual, $d^{k+1}=d \cdot d^k$ and $d^0 = 1_{\AAA}$.
\end{defn}

\begin{defn}
    For any $d\in \III$ and non-negative integer $k$, we define the linear space
    \[\mathfrak{F}^d_k = \left\{  \ang{x}{k}^d \ :\ x \in \mathcal{C} \right\},\]
    and denote the direct sum of these linear spaces by
    \[\mathfrak{F}^d = \bigoplus_{k \in \mathbb{Z}_0^+} \mathfrak{F}^d_k. \]
\end{defn}

In this work we will only use a single non-trivial element of $\III$ which is not in $\CCC$, $d \in \III\setminus\CCC$, and can drop the superscript $d$, writing $\ang{x}{k}$, $\mathfrak{F}_k$ and $\mathfrak{F}$
in place of $\ang{x}{k}^d$, $\mathfrak{F}^d_k$ and $\mathfrak{F}^d$, respectively. To simplify notation further, we denote $\ad_d^i(x)$ by $D^i x$.

Since $D$ is a derivation on $\AAA$ (and $\CCC$), $D(a \cdot b) = D a \cdot b + a \cdot D b, $
and it distributes binomially on $\AAA$ (and $\CCC$),
\begin{equation}
     \label{eq:binomD} D^k (a \cdot b) = \sum_{i=0}^k \binom{k}{i} D^i a\cdot D^{k-i} b.
\end{equation}
Starting from $d \cdot x = D x + x \cdot d$, a simple inductive procedure leads to a similar binomial identity,
\begin{equation}
    \label{eq:binom}
    d^k \cdot x = \sum_{i=0}^k \binom{k}{i} D^i x \cdot d^{k-i}.
\end{equation}

\section{$\mathfrak{F}$ as an associative algebra}
\label{sec:ass}
\begin{lemma}
    \label{lem:angass}
    The linear space $\mathfrak{F}$ is an associative algebra with
    \begin{equation}
    \label{eq:angass}
        \ang{x}{k} \cdot \ang{y}{l} = \sum_{n=0}^{k+l} \ang{z_n}{k+l-n},
    \end{equation}
    where the terms
    \begin{equation}
        \label{eq:zs}
        z_n = \sum_{i=0}^{n} \pi_{n,i}^{k,l}\,D^i x\, D^{n-i} y
    \end{equation}
    are in $\CCC$ and $\pi_{n,i}^{k,l} \in \mathbb{Q}$.
\end{lemma}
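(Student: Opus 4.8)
Because $\mathfrak{F}$ is by construction a linear subspace of the associative algebra $\AAA$, associativity and bilinearity of the product are inherited for free; the genuine content of the lemma is \emph{closure}---that $\ang{x}{k}\cdot\ang{y}{l}$ is again a finite sum of symmetrised elements---together with the formula \eqref{eq:angass} and the bilinear shape \eqref{eq:zs} of its coefficients. The plan is to route everything through a \emph{left normal form}: any element of $\AAA$ assembled from $\CCC$ and $d$ can be written as $\sum_m w_m\, d^m$ with $w_m\in\CCC$, by repeatedly invoking the binomial identity \eqref{eq:binom} to carry each power of $d$ to the right past the members of $\CCC$. Conceptually this identifies $\mathfrak{F}$ with the set of elements expressible as $\sum_m w_m d^m$ with $w_m\in\CCC$, on which the product of $\AAA$ manifestly restricts; making this identification precise is what drives the proof.

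Concretely I would proceed in two steps. First, expand $\ang{x}{k}\cdot\ang{y}{l}=\tfrac14(x d^k + d^k x)(y d^l + d^l y)$ and apply \eqref{eq:binom} to each of the four terms until every power of $d$ stands on the right, obtaining $\ang{x}{k}\cdot\ang{y}{l}=\sum_{m=0}^{k+l} w_m\, d^m$ with each $w_m\in\CCC$. That the $w_m$ land in $\CCC$ is precisely the idealiser property defining $\III$: $D=\ad_d$ maps $\CCC$ into $\CCC$, and $\CCC$ is closed under multiplication, so every factor manufactured by \eqref{eq:binom} remains in $\CCC$. Second, I would invert the passage between symmetrised elements and left monomials. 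The identity $\ang{z}{m}=z\,d^m+\tfrac12\sum_{i=1}^m\binom{m}{i}D^i z\, d^{m-i}$, itself a direct consequence of \eqref{eq:binom}, exhibits $\ang{z}{m}$ as $z\,d^m$ plus terms of strictly lower degree in $d$. This unitriangular structure lets me solve, by descending back-substitution on the power of $d$, for elements $z_0,\dots,z_{k+l}\in\CCC$ with $\sum_{n}\ang{z_n}{k+l-n}=\sum_m w_m d^m$. Each $z_n$ comes out as a $\mathbb{Q}$-linear combination of the $w_m$ and their $D$-derivatives, hence lies in $\CCC$; the back-substitution is always solvable, which yields closure. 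Note that this argument compares the two symbolic left normal forms and so never requires the powers $d^m$ to be independent over $\CCC$.

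To pin down the specific bilinear form \eqref{eq:zs} I would introduce a grading in which $d$ carries weight $1$ and each application of $D$ to a member of $\CCC$ also raises the weight by $1$. Under this grading $\ang{x}{k}$ is homogeneous of weight $k$, since every term $D^i x\, d^{k-i}$ has weight $k$, so $\ang{x}{k}\cdot\ang{y}{l}$ is homogeneous of weight $k+l$. Consequently the coefficient $w_m$ carries derivative-weight $k+l-m$ and the solved coefficient $z_n$ carries derivative-weight $n$. Since the product is linear in $x$ and in $y$ separately, each monomial of $z_n$ contains exactly one (possibly $D$-differentiated) factor from $x$ and one from $y$, and the only way to distribute $n$ derivatives between them is as $D^i x\, D^{n-i} y$ with $i+(n-i)=n$. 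This is exactly \eqref{eq:zs}, with rational numbers $\pi^{k,l}_{n,i}$ that depend only on $k,l,n,i$ and not on the particular $\AAA,\CCC,d,x,y$.

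I expect the main obstacle to lie not in producing the left normal form---the binomial identity handles that routinely---but in controlling the shape of the coefficients. The back-substitution by itself only certifies $z_n\in\CCC$; it is the homogeneity/grading argument that collapses the a priori general $\CCC$-element $z_n$ down to the two-factor form \eqref{eq:zs}. I would deliberately keep the determination of the actual values $\pi^{k,l}_{n,i}$ out of this lemma---these are what the subsequent Lemma~\ref{lem:explicit_pi} extracts---so that the present statement need only secure closure, the membership $z_n\in\CCC$, and the structural form \eqref{eq:zs}.
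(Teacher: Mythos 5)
Your proposal is correct and follows essentially the same route as the paper: both expand the product into a left normal form $\sum_m w_m\, d^m$ via the binomial identity (\ref{eq:binom}), and both exploit the unitriangular structure $\ang{z}{m} = z\, d^m + \mbox{lower-order terms in } d$ to solve a triangular system by matching coefficients of powers of $d$, with the explicit caveat (made in both arguments) that such matching is sufficient to verify (\ref{eq:angass}) and need not be the unique solution, so no independence of the $d^m$ over $\CCC$ is required. The one place you diverge is in pinning down the bilinear form (\ref{eq:zs}): the paper substitutes that form as an ansatz into the matching equations and equates coefficients of $D^p x\, D^{a-p} y$, producing the explicit recursion (\ref{eq:Asol}) for the $\pi_{n,i}^{k,l}$, which doubles as the symbolic algorithm used to generate Table~\ref{tab:pi} and feeds into Lemma~\ref{lem:explicit_pi}; you instead establish closure ($z_n \in \CCC$) first and then collapse the shape of $z_n$ by a weight/homogeneity argument combined with bilinearity and the Leibniz rule (\ref{eq:binomD}). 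Your grading argument is sound once one observes (as you do implicitly) that every term produced by the normal-form expansion and by the back-substitution is a product of exactly one $D$-derivative of $x$ and one of $y$, with total derivative order fixed by the weight; it buys a cleaner conceptual separation of closure from coefficient computation, while the paper's ansatz buys a constructive recursion for the coefficients that the rest of the paper actually uses.
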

\begin{proof}
    \normalfont
    A proof for Lemma~\ref{lem:angass} follows by expanding \R{eq:angass} using the binomial identity \R{eq:binom} and comparing powers of $d$. At first, we expand the left hand side,
    \begin{Eqnarray*}
        4 \ang{x}{k} \cdot \ang{y}{l}  & = & (x \cdot d^k + d^k \cdot x) \cdot 2 \ang{y}{l} \nonumber \\
        & = & x \cdot (d^k \cdot y) \cdot d^l + x \cdot (d^{k+l} \cdot y) + (d^k \cdot x) \cdot 2 \ang{y}{l}\\
        & = & x \cdot \left(\sum_{i=0}^k \binom{k}{i} D^i y \cdot d^{k-i} \right)\cdot d^l + x \cdot \sum_{i=0}^{k+l} \binom{k+l}{i} D^i y \cdot d^{k+l-i}\\
        &&+ \left(\sum_{i=0}^k \binom{k}{i} D^i x \cdot d^{k-i}\right) \cdot \left( y \cdot d^l + d^l \cdot y \right)\\
        &=& \sum_{i=0}^{k+l} \left[ \binom{k}{i} + \binom{k+l}{i}\right] (x\ D^i y) \cdot d^{k+l-i}\\
        && + \sum_{i=0}^{k} \sum_{j=0}^{k+l-i} \binom{k}{i}  \left[ \binom{k-i}{j} + \binom{k+l-i}{j} \right] (D^i x\  D^j y) \cdot d^{k+l-i-j}.
    \end{Eqnarray*}
    The right hand side of (\ref{eq:angass}) is expanded in similar fashion,
    \begin{equation*}
    \label{eq:Rab}
    4 \sum_{n=0}^{k+l} \ang{z_n}{k+l-n} =  2 \sum_{n=0}^{k+l} z_n \cdot d^{k+l-n} + 2 \sum_{n=0}^{k+l} \sum_{i=0}^{k+l-n} \binom{k+l-n}{i} D^i z_n \cdot d^{k+l-n-i}.
    \end{equation*}
    A sequence of terms $z_n$ for which all terms accompanying $d^{k+l-a}$, $a \in \{0, \ldots, k+l\}$, match on both sides will certainly satisfy \R{eq:angass} although it need not be the unique solution. This leads to the relation
    \begin{Eqnarray}
        \label{eq:zs_match}
        2 z_a + 2 \sum_{n=0}^{k+l} \binom{k+l-n}{a-n} D^i z_n  & = &\left[ \binom{k}{a} + \binom{k+l}{a}\right] (x\ D^a y)  \\
        \nonumber &&+ \sum_{i=0}^{a} \binom{k}{a}  \left[ \binom{k-i}{a-i} + \binom{k+l-i}{a-i} \right] (D^i x\  D^{a-i} y),
    \end{Eqnarray}
    which can easily be fashioned into a recursive procedure for solving $z_a$, starting from $z_0 = x y$ (found by substituting $a=0$).

    The second part of the lemma states that $z_n$ of the form \R{eq:zs} satisfy \R{eq:angass} for some $\pi_{n,i}^{k,l} \in \mathbb{Q}$. After substituting this form in \R{eq:zs_match}, the left side is comprised of
    \begin{equation}
        \label{eq:R1}
        2 \sum_{i=0}^{a} \pi_{a,i}^{k,l}\, D^i x\ D^{a-i} y
    \end{equation}
    and
    \begin{align}
        \nonumber    & 2 \sum_{n=0}^{k+l} \binom{k+l-n}{a-n} D^{a-n} \left( \sum_{i=0}^{n} \pi_{n,i}^{k,l}\, D^i x\ D^{n-i} y \right)\\
        \label{eq:R2}   &\mbox{}\qquad\qquad\qquad = 2 \sum_{n=0}^{k+l} \binom{k+l-n}{a-n} \sum_{i=0}^{n} \pi_{n,i}^{k,l} \sum_{j=0}^{a-n} \binom{a-n}{j} D^{i+j} x\ D^{a-i-j} y,
    \end{align}
    where the inner term $D^{a-n} \cdot \left( D^i x\ D^{n-i} y \right)$ has been expanded using \R{eq:binomD}.

    We now equate terms accompanying $D^p x\ D^{a-p} y$ in \R{eq:zs_match}, noting that this would give a solution which need not be unique.
    We arrive at equations of the form
    \begin{equation*}
            R_{a,p}^{k,l} = L_{a,p}^{k,l}, \qquad p \in \{0, \ldots, a\},\quad a \in \{0, \ldots, k+l \},
    \end{equation*}
    where
    \begin{eqnarray}
        \label{eq:RHSlambda}
        R_{a,p}^{k,l} &=& 2 \pi_{a,p}^{k,l} + 2 \sum_{n=0}^{a} \binom{k+l-n}{a-n}  \sum_{i=0}^{n} \pi_{n,i}^{k,l} \binom{a-n}{p-i},\\
        \label{eq:LHSlambda}
        L_{a,p}^{k,l} &=& \delta_{p,0} \left( \binom{k}{a} + \binom{k+l}{a}\right) \\
        \nonumber && +\binom{k}{p} \left( \binom{k-p}{a-p} + \binom{k+l-p}{a-p} \right),
    \end{eqnarray}
    and $\delta_{i,j}$ is the Kronecker delta function. The fact that a recursive procedure for finding $\pi$s can be designed,
    \begin{equation}
        \label{eq:Asol}
        \pi_{a,p}^{k,l} = \frac14 \left(L_{a,p}^{k,l} - 2\sum_{n=0}^{a-1} \binom{k+l-n}{a-n}  \sum_{i=0}^{n} \pi_{n,i}^{k,l} \binom{a-n}{p-i} \right),
    \end{equation}
    starting from $\pi_{0,0}^{k,l} = 1$, is the proof that such $\pi_{n,i}^{k,l} \in \mathbb{Q}$ exist, whereby some $z_n \in \CCC$ of the form \R{eq:zs} satisfy \R{eq:angass}.
    Hence $\mathfrak{F}$ is an associative algebra with the prescribed properties \R{eq:angass} and \R{eq:zs}.
\end{proof}

\subsection{Explicit form of the coefficients}
The recursive procedure \R{eq:Asol} suffices for the proof of Lemma~\ref{lem:angass} and for expanding products in the associative
algebra $\mathfrak{F}$ using a symbolic computation algorithm. Coefficients derived using this procedure for a few combinations of $k$ and $l$ are listed in Table~\ref{tab:pi} in Appendix~\ref{app:tables}.

Nevertheless, an explicit form for these coefficients remains highly desirable for a variety of reasons.
Firstly, this could give us an explicit form for some analytic functions on $\mathfrak{F}$ (or some subspace of $\mathfrak{F}$), such as the exponential map.
Secondly, an analysis of growth of these coefficients could allow us to deduce estimates concerning the convergence of some of these functions.
Lastly, the observations which lead us to the structural properties of the Lie algebra in Section~\ref{sec:lie}---the main result of this paper---are not immediately evident in the recursive form.

We state here some results concerning the form of the coefficients, confining the proofs to appendix~\ref{app:explicit_pi} and appendix~\ref{app:genfun_pi} for ease of narration.
\begin{lemma}
The explicit form of the coefficients is given by
    \label{lem:explicit_pi}
    \begin{equation}
    \label{eq:explicit_pi}
    \pi_{n,i}^{k,l} = \frac12 \sum_{s=0}^{k+l} \sum_{j=0}^{k+l} A_{(n,i),(s,j)}^{k+l}\ L_{s,j}^{k,l},
    \end{equation}
where
    \begin{equation}
    A_{(n,i),(s,j)}^{q} = \begin{cases} \delta_{n,s}\delta_{i,j} - \frac{P_{n-s+1}}{n-s+1} \binom{q-s}{n-s} \binom{n-s}{i-j} & \qquad n \geq s,\ i \geq j, \\
    0 & \qquad \mathrm{otherwise},
    \end{cases}
    \end{equation}
and $P_r$ are defined in terms of the Bernoulli numbers $B_r$,
    \begin{equation*}
    \label{eq:P_r}
        P_r = (-1)^r (2^r -1) B_r.
    \end{equation*}
\end{lemma}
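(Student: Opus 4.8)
The plan is to recognise the recursion \eqref{eq:Asol} as a finite triangular linear system and to prove \eqref{eq:explicit_pi} by verifying that the array $A^{q}$ is (twice) the inverse of the system matrix. Fix $q=k+l$ and regard $\pi^{k,l}=(\pi^{k,l}_{n,i})$ and $L^{k,l}=(L^{k,l}_{n,i})$ as vectors indexed by the finite set of pairs $(n,i)$ with $0\le i\le n\le q$. Then \eqref{eq:RHSlambda}--\eqref{eq:LHSlambda} is exactly $L^{k,l}=M\pi^{k,l}$, where
\[ M_{(a,p),(n,i)} = 2\,\delta_{a,n}\delta_{p,i} + 2\binom{q-n}{a-n}\binom{a-n}{p-i}. \]
Ordered by the first index, $M$ is block lower triangular with diagonal entries equal to $4$, hence invertible over $\mathbb{Q}$. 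Since the claimed $A^{q}$ is supported on the same triangular pattern ($n\ge s$, $i\ge j$), establishing \eqref{eq:explicit_pi} is equivalent to the single matrix identity $\tfrac12 A^{q} M = I$, i.e.
\[ \sum_{s,j} A^{q}_{(n,i),(s,j)}\, M_{(s,j),(m,r)} = 2\,\delta_{n,m}\delta_{i,r}. \]

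First I would expand the left-hand side using $A^{q}=\delta\delta-C$ with $C_{(n,i),(s,j)}=\frac{P_{n-s+1}}{n-s+1}\binom{q-s}{n-s}\binom{n-s}{i-j}$ together with the two pieces of $M$. The product of the two Kronecker parts reproduces the required $2\,\delta_{n,m}\delta_{i,r}$, so everything reduces to showing that the three remaining cross terms cancel. In each of them the summation over the second index $j$ is a Vandermonde convolution $\sum_{j}\binom{n-s}{i-j}\binom{s-m}{j-r}=\binom{n-m}{i-r}$, while the summation over the ``degree'' index $s$ is handled by the subset-of-subset identity $\binom{q-m}{s-m}\binom{q-s}{n-s}=\binom{q-m}{n-m}\binom{n-m}{s-m}$. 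After pulling out the common factor $\binom{q-m}{n-m}\binom{n-m}{i-r}$, the entire cancellation collapses to the scalar identity
\[ \sum_{t=0}^{w}\binom{w}{t}\frac{P_{t+1}}{t+1}=1-\frac{P_{w+1}}{w+1},\qquad w=n-m\ge0. \]

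To finish, I would prove this scalar identity with exponential generating functions. Starting from $\frac{x}{e^{x}+1}=\frac{x}{e^{x}-1}-\frac{2x}{e^{2x}-1}$ together with $\frac{x}{e^{x}-1}=\sum_{r}\frac{B_r}{r!}x^{r}$ (so $B_1=-\tfrac12$, consistent with $P_1=\tfrac12$ and the $w=0$ diagonal case), one obtains the closed form $g(x):=\sum_{r}\frac{P_r}{r!}x^{r}=\frac{x e^{x}}{e^{x}+1}$, using $P_0=0$. Hence $\hat a(x):=\sum_{t}\frac{1}{t!}\frac{P_{t+1}}{t+1}x^{t}=g(x)/x=\frac{e^{x}}{e^{x}+1}$, so the binomial-transformed left-hand side has exponential generating function $e^{x}\hat a(x)=\frac{e^{2x}}{e^{x}+1}$, whereas the right-hand side has exponential generating function $e^{x}-\hat a(x)=\frac{e^{2x}}{e^{x}+1}$; equality of the two series yields the identity for every $w$, and with it \eqref{eq:explicit_pi}.

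The main obstacle is the bookkeeping in the double collapse: one must keep the index ranges exactly right so that the double sum factors cleanly into a $P$-free Vandermonde part and the scalar Bernoulli sum. Once this factorisation is secured the argument is routine, the remaining content being purely the generating-function identity for the $P_r$, which I would expect to overlap substantially with the companion Lemma~\ref{lem:genfun_pi}.
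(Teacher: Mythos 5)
Your proposal is correct and is essentially the paper's own argument: the paper likewise recasts the recursion as a triangular linear system (its matrix $S^{q}$ is exactly $\tfrac12 M$) and verifies the inverse identity by direct expansion---it checks $S A^{q}=I$ where you check $A^{q}S=I$, equivalent for finite square matrices---with the cross term collapsing through the same Vandermonde and trinomial-revision manipulations to the same scalar Bernoulli sum. The only notable difference is executional: you state that sum uniformly as $\sum_{t=0}^{w}\binom{w}{t}\frac{P_{t+1}}{t+1}=1-\frac{P_{w+1}}{w+1}$ and prove it via the generating function $\frac{\ee^{x}}{\ee^{x}+1}$ (the same function the paper derives as equation (C.2) for Lemma~\ref{lem:genfun_pi}), whereas the paper writes the identity with $\delta_{b,0}$/$\delta_{b>0}$ case splits, proves it through Bernoulli-polynomial values at $1$ and $\tfrac12$, and only cleans up the $P_{1}=\tfrac12$ boundary case at the very end.
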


Some properties of the coefficients $\pi_{n,i}^{k,l}$ become evident once we study their generating functions. In this context we make an exception in notation, using $u,w,x$ and $y$ as variables in which the formal series of the generating function is specified.
\begin{lemma}
    \label{lem:genfun_pi}
    The generating function,
    \begin{equation}
    \label{eq:genfun_pi}
    h(u,w,y,x) = \sum_{l=0}^{\infty} \frac{u^l}{l!} \sum_{k=0}^{\infty} \frac{w^k}{k!} \sum_{n=0}^{k+l} (k+l-n)! y^n \sum_{i=0}^{n} x^i \pi^{k,l}_{n,i}\ ,
    \end{equation}
    for the coefficients $\pi_{n,i}^{k,l}$ appearing in (\ref{eq:angass}) is
    \begin{equation}
    \label{eq:genfun_sol_pi}
    h(u,w,y,x) = \frac{\exp\left((wy-uxy)/2\right) }{1-(w+u)} \frac{\cosh(uy/2)\cosh(wxy/2)}{\cosh(y(u+w)(1+x)/2)}.
    \end{equation}
\end{lemma}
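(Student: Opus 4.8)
The plan is to build the generating function directly from the closed form of the coefficients supplied by Lemma~\ref{lem:explicit_pi}. Writing \eqref{eq:explicit_pi} as $\pi_{n,i}^{k,l}=\tfrac12 L_{n,i}^{k,l}-\tfrac12\sum_{m\ge0}\sum_{b\ge0}\frac{P_{m+1}}{m+1}\binom{k+l-n+m}{m}\binom{m}{b}L_{n-m,\,i-b}^{k,l}$ (the diagonal $\delta_{n,s}\delta_{i,j}$ of $A$ giving the first term), I substitute this into \eqref{eq:genfun_pi}. After interchanging the order of summation and reindexing $(s,j)=(n-m,i-b)$, the inner sums telescope: $\sum_{b=0}^m\binom{m}{b}x^{b}=(1+x)^m$ and $(k+l-s-m)!\binom{k+l-s}{m}=(k+l-s)!/m!$. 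This collapses the four-fold coefficient sum into
\[ h(u,w,y,x)=\tfrac12\,\phi(u,w,y,x)-\tfrac12\,\Phi(u,w,y,x), \]
where $\phi$ is the generating function of $L_{s,j}^{k,l}$ carrying the weights $(k+l-s)!\,y^sx^j$, and $\Phi$ is the same object with the extra Bernoulli factor $T_{k+l-s}(\sigma):=\sum_{m=0}^{k+l-s}\frac{P_{m+1}}{(m+1)!}\sigma^m$ attached, $\sigma:=y(1+x)$. Thus the whole computation reduces to resumming two explicit series in $L$.

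The engine for both resummations is the integral representation of the factorial, $m!=\int_0^\infty t^m e^{-t}\,dt$, together with its truncated variant $m!\sum_{r=0}^m y^r/r!=e^{y}\int_y^\infty t^m e^{-t}\,dt$. Applied to $\phi$, these turn every weight $(k+l-s)!$ (or its truncated-exponential companions coming from the $\binom{\cdot}{\cdot}$ terms of \eqref{eq:LHSlambda} that carry $q=k+l$) into a $t$-power; the sums over $k$ and $l$ then collapse via $\sum_{k,l}\frac{(ut)^l(wt)^k}{k!l!}=e^{(u+w)t}$ and the remaining elementary $t$-integrals all evaluate to $1/(1-(u+w))$ times an exponential. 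Carrying the two pieces of $L_{s,j}^{k,l}$ through gives the closed form $\phi=\bigl(e^{wy}+e^{(u+w)y}+e^{wy(1+x)}+e^{(u+w)y+wxy}\bigr)/(1-(u+w))$. This part is long but entirely mechanical.

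The substance of the proof is the evaluation of $\Phi$, and this is where I expect the main difficulty. The Bernoulli numbers enter through $P_r=(-1)^r(2^r-1)B_r$, whose exponential generating function I would first establish in the compact form $\sum_{r\ge0}P_r z^r/r!=z/(1+e^{-z})$, deduced from the standard series $\sum_r B_r z^r/r!=z/(e^z-1)$. The delicate point is that the sum defining $T_{k+l-s}$ is genuinely truncated at $m=k+l-s$: replacing it by $\sum_{m\ge0}$ would merely pull a scalar factor $1/(1+e^{\sigma})$ out front, which cannot reproduce the argument $y(u+w)(1+x)/2=\sigma(u+w)/2$ appearing inside the hyperbolic cosine in \eqref{eq:genfun_sol_pi}. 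The resolution is again the integral representation: it preserves the truncation as a finite binomial convolution $\sum_{m}\binom{k+l-s}{m}\frac{P_{m+1}}{m+1}\sigma^m t^{k+l-s-m}$, which---precisely because the power of $t$ and the power of $\sigma$ share the exponent $k+l-s$---couples $\sigma$ to the variable conjugate to $k+l$ once the $k,l$-sums are performed. Resumming this convolution against $z/(1+e^{-z})$ and integrating in $t$ is what manufactures the hyperbolic cosines, and in particular places $(u+w)$ inside $\cosh(\sigma(u+w)/2)$.

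Finally I would assemble $h=\tfrac12(\phi-\Phi)$ and reduce the resulting combination of exponentials and hyperbolic functions to \eqref{eq:genfun_sol_pi}; the consistency check $\pi_{1,0}^{k,l}=k/2$, read off both from the explicit formula and from the first-order-in-$y$ expansion of the claimed answer, fixes the normalisation. I expect this last hyperbolic simplification to be routine but error-prone, so the bulk of the care goes into the $\Phi$ resummation rather than into the final algebra.
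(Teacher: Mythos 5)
Your proposal is correct, and its skeleton coincides with the paper's own proof in Appendix~\ref{app:genfun_pi}: the paper likewise starts from Lemma~\ref{lem:explicit_pi}, splits $\pi^{k,l}_{n,i}$ into the diagonal-of-$A$ part acting on the four terms of $L$ (its $h_1,\dots,h_4$, your $\phi$) and the Bernoulli-convolution part (its $h_5,\dots,h_8$, your $\Phi$), obtains $\Phi=\phi\,\ee^{z}/(\ee^{z}+1)$ with $z=y(u+w)(1+x)$, and ends with the same exponentials-to-cosh reduction; your closed form for $\phi$ matches $h_1+h_2+h_3+h_4$ exactly. The genuine difference is the resummation engine. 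The paper stays entirely discrete: the factor $1/(1-(u+w))$ and the placement of $(u+w)$ inside $z$ both come from the geometric-series identity (\ref{eq:genfun_wu}), and your ``delicate point''---the truncation of the Bernoulli sum---simply evaporates there via the exchanges (\ref{eq:sum_nr})--(\ref{eq:sum_nk}): once $r$ is made the outer index, the inner sum over $n\geq r$ is unconstrained, while (\ref{eq:genfun_wu}) has already delivered the factor $(w+u)^r$. Your Borel-type substitute ($m!=\int_0^\infty t^m\ee^{-t}\,dt$) does close, and for exactly the reason you identify: writing the Bernoulli piece as $\int_0^\infty \ee^{-t}\sum_{k,l,s}(\cdots)\,G_{k+l-s}(\sigma,t)\,dt$ with $G_N(\sigma,t)=\sum_{m=0}^{N}\binom{N}{m}\frac{P_{m+1}}{m+1}\sigma^m t^{N-m}$, one groups the triple sum by fixed $N=k+l-s$ (for instance $\sum_{k+l=N+s}\frac{u^lw^k}{l!\,k!}\binom{k}{s}=\frac{w^s}{s!}\frac{(u+w)^N}{N!}$), and the exponential generating function $\sum_{N}G_N(\sigma,t)\,\zeta^N/N!=\ee^{t\zeta}\,\ee^{\sigma\zeta}/(\ee^{\sigma\zeta}+1)$ evaluated at $\zeta=u+w$ produces precisely $\ee^{z}/(\ee^{z}+1)$, after which $\int_0^\infty\ee^{-t(1-u-w)}\,dt$ supplies $1/(1-(u+w))$; the same grouping handles the $q=k+l$ terms of (\ref{eq:LHSlambda}) without even needing the incomplete-gamma variant. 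So both proofs rest on the same two ingredients---the generating function (\ref{eq:genfun_Pr}) and the fact that the truncation index is conjugate to $u+w$---yours organised by an integral transform, which is arguably tidier to set up, the paper's by index gymnastics that never leave the realm of formal power series and hence avoid any interchange of sums and integrals.
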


\begin{lemma}
    \label{lem:symmetry_pi} The coefficients possess the symmetry,
    \begin{equation*}
        \pi_{n,i}^{k,l} = (-1)^n \pi_{n,n-i}^{l,k}.
    \end{equation*}
\end{lemma}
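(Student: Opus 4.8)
The coefficients $\pi_{n,i}^{k,l}$ are universal rationals determined by the recursion \eqref{eq:Asol} independently of the ambient algebra, so I would read the symmetry directly off the generating function of Lemma~\ref{lem:genfun_pi} rather than off the recursion or the explicit formula. The plan is to establish the single functional identity
\[ h(w,u,y,x) = h(u,w,-xy,1/x) \]
and then extract coefficients. The right-hand side is designed to encode exactly the index map in the claim: the swap $u\leftrightarrow w$ interchanges the roles of $k$ and $l$; the substitution $x\mapsto 1/x$ reverses the inner polynomial $\sum_i x^i\pi_{n,i}^{k,l}$; and the accompanying $y\mapsto -xy$ supplies a factor $(-x)^n$ that simultaneously sends the $x$-index $i\mapsto n-i$ (restoring non-negative powers) and produces the sign $(-1)^n$.

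The main computation is to verify this identity from the closed form \eqref{eq:genfun_sol_pi}. Since $1-(w+u)$ and $\cosh(y(u+w)(1+x)/2)$ are already symmetric in $u$ and $w$, only the exponential prefactor and the two numerator hyperbolic cosines need be tracked under $(y,x)\mapsto(-xy,1/x)$. A direct check gives $(w(-xy)-u(1/x)(-xy))/2=(uy-wxy)/2$ for the exponent, while evenness of $\cosh$ gives $\cosh(u(-xy)/2)=\cosh(uxy/2)$ and $\cosh(w(1/x)(-xy)/2)=\cosh(wy/2)$; the denominator is invariant because $x(1+1/x)=1+x$ and $\cosh$ is even. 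These are precisely the terms obtained from $h(u,w,y,x)$ after the swap $u\leftrightarrow w$, which proves the identity.

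The one step that needs care is the legitimacy of the substitution $x\mapsto 1/x$, which on its own would introduce negative powers of $x$ into a formal power series; the resolution is that the combined substitution keeps us within power series, since the inner sum in \eqref{eq:genfun_pi} runs only over $0\le i\le n$, so each monomial $(-xy)^n x^{-i}$ carries the non-negative exponent $x^{n-i}$. With this in hand, comparing coefficients is immediate: expanding the left-hand side and relabelling $k\leftrightarrow l$ shows that the coefficient of $\tfrac{u^l}{l!}\tfrac{w^k}{k!}(k+l-n)!\,y^n x^i$ in $h(w,u,y,x)$ is $\pi_{n,i}^{l,k}$, whereas the same coefficient on the right-hand side is $(-1)^n\pi_{n,n-i}^{k,l}$. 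Equating and renaming $k\leftrightarrow l$ gives $\pi_{n,i}^{k,l}=(-1)^n\pi_{n,n-i}^{l,k}$, as required. (Conceptually the same identity reflects an anti-automorphism fixing $\CCC$ and sending $d\mapsto -d$, under which $\ang{x}{k}\mapsto(-1)^k\ang{x}{k}$ while products reverse; but such a map is not available in the abstract setting, so the generating-function argument is the one I would record.)
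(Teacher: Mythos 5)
Your proposal is correct and is essentially the paper's own proof: the paper establishes the very same identity, showing that the generating function of $(-1)^n\pi_{n,n-i}^{l,k}$ equals $h(w,u,-xy,1/x)$ and then collapsing the closed form (\ref{eq:genfun_sol_pi}) back to $h(u,w,y,x)$ using evenness of $\cosh$ and $x(1+1/x)=1+x$. The only differences are cosmetic---you verify the functional identity first and extract coefficients afterwards, whereas the paper proceeds in the reverse order, and you additionally make explicit the (correct) observation that the combined substitution $(y,x)\mapsto(-xy,1/x)$ keeps all powers of $x$ non-negative.
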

\begin{proof}\normalfont
    We show that the generating function for $(-1)^n \pi_{n,n-i}^{l,k}$ coincides with the generating function of $\pi_{n,i}^{k,l}$,
    \begin{Eqnarray*}
    \label{eq:genfun_structure_pi}
    g(u,w,y,x) &=& \sum_{l=0}^{\infty} \frac{u^l}{l!} \sum_{k=0}^{\infty} \frac{w^k}{k!} \sum_{n=0}^{k+l} (k+l-n)! y^n \sum_{i=0}^{n} x^i  (-1)^n \pi_{n,n-i}^{l,k} \\
    &=& \sum_{l=0}^{\infty} \frac{w^l}{l!} \sum_{k=0}^{\infty} \frac{u^k}{k!} \sum_{n=0}^{k+l} (k+l-n)! (-y)^n \sum_{i=0}^{n} x^{n-i} \pi_{n,i}^{k,l}\\
    &=& h(w,u, -xy,1/x)\\
    &=& \frac{\exp\left((-uxy+wy)/2\right) }{1-(w+u)} \frac{\cosh(-wxy/2)\cosh(-uy/2)}{\cosh(-yx(u+w)(1+1/x)/2)}\\
    &=& \frac{\exp\left((wy-uxy)/2\right)}{1-(w+u)} \frac{\cosh(wxy/2)\cosh(uy/2)}{\cosh(yx(u+w)(1+1/x)/2)}\\
    &=& h(u,w,y,x).
    \end{Eqnarray*}
\end{proof}

\section{$\mathfrak{F}$ as a Lie algebra}
\label{sec:lie}
Since $\mathfrak{F}$ is an associative algebra, it is immediately obvious that it is also a Lie algebra with the commutator as a canonical Lie product. From \R{eq:angass}, we know that the commutator can be expanded to
\begin{equation}
    \label{eq:mulie}
    \left[ \ang{x}{k} , \ang{y}{l} \right] = \sum_{n=0}^{k+l} \sum_{i=0}^{n} \mu_{n,i}^{k,l} \ang{D^i x\, D^{n-i} y}{k+l-n},
\end{equation}
where $\mu_{n,i}^{k,l} = \pi_{n,i}^{k,l} - \pi_{n,n-i}^{l,k}$. However, the structure of $\mathfrak{F}$ turns out to be more interesting than this.

\begin{theorem}
    \label{thm:anglie}
    $\mathfrak{F}$ is a Lie algebra where commutators can be solved explicitly using the rule
    \label{eq:lie}
    \begin{equation}
        \label{eq:anglie}
        \left[ \ang{x}{k} , \ang{y}{l} \right] = \sum_{n=0}^{\frac{k+l-1}{2}} \sum_{i=0}^{2n+1} \lambda_{n,i}^{k,l} \ang{D^i x\, D^{2n+1-i} y}{k+l-2n-1},
    \end{equation}
    where $\lambda_{n,i}^{k,l} = 2 \pi_{2n+1,i}^{k,l}\in \mathbb{Q}$.
\end{theorem}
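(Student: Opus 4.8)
The plan is to derive \R{eq:anglie} directly from the commutator expansion \R{eq:mulie} by exploiting the parity built into the coefficients. The coefficient multiplying $\ang{D^i x\, D^{n-i} y}{k+l-n}$ in \R{eq:mulie} is $\mu_{n,i}^{k,l} = \pi_{n,i}^{k,l} - \pi_{n,n-i}^{l,k}$, and the whole of the theorem amounts to the assertion that this quantity vanishes whenever $n$ is even and equals $2\pi_{n,i}^{k,l}$ whenever $n$ is odd. The sole ingredient needed to see this is the symmetry of Lemma~\ref{lem:symmetry_pi}.

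First I would rewrite the second term of $\mu_{n,i}^{k,l}$ using that symmetry. Reading $\pi_{n,i}^{k,l} = (-1)^n \pi_{n,n-i}^{l,k}$ backwards, and using $(-1)^{-n} = (-1)^n$, gives $\pi_{n,n-i}^{l,k} = (-1)^n \pi_{n,i}^{k,l}$. Substituting into the definition of $\mu_{n,i}^{k,l}$ yields
\[ \mu_{n,i}^{k,l} = \pi_{n,i}^{k,l} - (-1)^n \pi_{n,i}^{k,l} = \left(1 - (-1)^n\right)\pi_{n,i}^{k,l}, \]
whose prefactor is $0$ for even $n$ and $2$ for odd $n$. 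Thus every even-order term in \R{eq:mulie} drops out and only odd powers of the derivation $D$ contribute.

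Next I would reindex the surviving terms. Restricting \R{eq:mulie} to odd $n$ and writing $n = 2m+1$, the index $m$ runs from $0$ up to $\lfloor (k+l-1)/2 \rfloor$ (the largest admissible odd $n$ being $k+l$ when $k+l$ is odd and $k+l-1$ when it is even), which is precisely the range abbreviated by $\frac{k+l-1}{2}$ in \R{eq:anglie}. The surviving coefficient of the corresponding term is $\mu_{2m+1,i}^{k,l} = 2\pi_{2m+1,i}^{k,l}$; defining $\lambda_{m,i}^{k,l}$ to be this value and relabelling $m$ as $n$ reproduces \R{eq:anglie} exactly. Its rationality is inherited from the rationality of the $\pi_{n,i}^{k,l}$ established in Lemma~\ref{lem:angass}.

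I do not anticipate any genuine obstacle in this argument: once Lemma~\ref{lem:symmetry_pi} is available the proof is a single substitution followed by a relabelling of the summation range. The only points requiring a little care are the correct backward reading of the symmetry relation (confirming the sign is $(-1)^n$ and not $-(-1)^n$) and checking that $\lfloor (k+l-1)/2 \rfloor$ enumerates the odd values of $n$ in both parity cases of $k+l$. The substantive effort sits upstream, in the generating-function computation of Lemma~\ref{lem:genfun_pi} on which the symmetry Lemma~\ref{lem:symmetry_pi} depends.
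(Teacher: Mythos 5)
Your proposal is correct and follows essentially the same route as the paper: apply the symmetry $\pi_{n,i}^{k,l} = (-1)^n \pi_{n,n-i}^{l,k}$ of Lemma~\ref{lem:symmetry_pi} to the expansion \R{eq:mulie}, so that $\mu_{n,i}^{k,l} = (1-(-1)^n)\pi_{n,i}^{k,l}$ kills the even-$n$ terms and doubles the odd ones, then relabel $n = 2m+1$. Your write-up is merely a more explicit version of the paper's one-line argument, with the reindexing and summation range spelled out.
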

\begin{proof}
    \normalfont The even indexed coefficients $\mu_{2n,i}^{k,l}$ in \R{eq:mulie} vanish due to  Lemma~\ref{lem:symmetry_pi}, $\mu_{2n,i}^{k,l} = \pi_{2n,i}^{k,l} - (-1)^{2n} \pi_{2n,i}^{k,l} =0$, while $\mu_{2n+1,i}^{k,l} = \pi_{2n+1,i}^{k,l} - (-1)^{2n+1} \pi_{2n+1,i}^{k,l} = 2\pi_{2n+1,i}^{k,l}$. We conveniently rename $\mu_{2n+1,i}^{k,l}$ as $\lambda_{n,i}^{k,l}$.
\end{proof}


We now look at some interesting properties of this Lie algebra. Consider the linear spaces,
    \[\mathfrak{G}_n = \bigoplus_{k \leq n} \mathfrak{F}_k.\]
We note that, as a consequence of Theorem~\ref{thm:anglie}, we have a natural filteration,
    \[ \left[ \mathfrak{G}_k, \mathfrak{G}_l \right] \subseteq \mathfrak{G}_{k+l-1}. \]
This property of {\em height reduction} is evident in \R{eq:anglie} where the largest index $k+l-1$ occurs for $s=0$. We define the height of a term in $\mathfrak{G}_n$ as,
    \[ \mathrm{ht}\left(\sum_{k=0}^{n} \ang{x_k}{k} \right) = n, \]
if $x_n$ is non-zero. We define the height of $0$ as $-1$, this being the only term with negative height.
Thus $\mathrm{ht}\left(\ang{x}{k}\right) = k$ and, as a consequence of \R{eq:anglie},
    \begin{equation}
        \label{eq:ht}
        \mathrm{ht}\left( \left[ \ang{x}{k}, \ang{y}{l} \right] \right) \leq k + l - 1.
    \end{equation}

\begin{cor}
\label{cor:ht}
    {\bf (Height Reduction)}
    For any commutator $C$ featuring the \emph{letters} $a_i \in \mathfrak{G}_{k_i},\ i=1,\ldots, n$,
        \begin{equation}
            \label{eq:grade}
            \mathrm{ht}\left(C\left(a_i, \ldots, a_n\right) \right) \leq \sum_{i=1}^n k_i - n + 1.
        \end{equation}
\end{cor}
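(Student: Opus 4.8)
The plan is to argue by induction on the number of letters $n$, using Theorem~\ref{thm:anglie} — or rather its immediate consequence, the filtration $[\mathfrak{G}_k,\mathfrak{G}_l]\subseteq\mathfrak{G}_{k+l-1}$ — as the engine of the inductive step. Here I read ``a commutator $C$ featuring the letters $a_1,\ldots,a_n$'' as an iterated (nested) Lie bracket in which each $a_i$ occurs exactly once, that is, a bracketing of the $n$ letters. For the base case $n=1$ there is no bracketing to perform: $C(a_1)=a_1\in\mathfrak{G}_{k_1}$, so $\mathrm{ht}(C)\le k_1=\sum_{i=1}^1 k_i-1+1$, as required.

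For the inductive step I would take $n\ge 2$ and assume the bound for every commutator in fewer than $n$ letters. Any such $C$ of degree $n\ge 2$ splits at its outermost bracket as $C=[C',C'']$, where $C'$ is a commutator in the letters indexed by some subset $S\subseteq\{1,\ldots,n\}$ of size $m$, and $C''$ is a commutator in the complementary letters, of size $n-m$. Applying the inductive hypothesis to each factor gives
\[
\mathrm{ht}(C')\le \sum_{i\in S}k_i-m+1=:K', \qquad \mathrm{ht}(C'')\le \sum_{i\notin S}k_i-(n-m)+1=:K'',
\]
so that $C'\in\mathfrak{G}_{K'}$ and $C''\in\mathfrak{G}_{K''}$. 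The filtration then yields $C=[C',C'']\in\mathfrak{G}_{K'+K''-1}$, and a one-line computation gives
\[
K'+K''-1=\sum_{i=1}^n k_i-m-(n-m)+1=\sum_{i=1}^n k_i-n+1,
\]
which is exactly \R{eq:grade} and closes the induction.

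I do not anticipate a genuine obstacle: the whole content is carried by the binary estimate \R{eq:ht}, and the corollary is just its ``linearisation'' over an arbitrary bracketing. The two points needing a little care are, first, making precise that every Lie monomial of degree $\ge 2$ decomposes as $[C',C'']$ with the letters partitioned between the two factors, so that the index sets are disjoint and the counts $m$ and $n-m$ sum to $n$; and second, checking that the filtration, which \R{eq:ht} states for the generating elements $\ang{x}{k}$, extends to arbitrary elements of $\mathfrak{G}_{K'}$ and $\mathfrak{G}_{K''}$. The latter follows by bilinearity of the bracket, since each cross term $[\ang{x}{k'},\ang{y}{l'}]$ with $k'\le K'$ and $l'\le K''$ already obeys \R{eq:ht}. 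The telescoping of the $-m$ and $-(n-m)$ contributions into a single $-n$ is what makes the final bound clean, and the degenerate cases — for instance a factor evaluating to $0$, of height $-1$ — only strengthen the inequality.
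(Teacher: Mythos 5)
Your proof is correct and takes essentially the route the paper intends: the paper states Corollary~\ref{cor:ht} without any explicit proof, presenting it as an immediate consequence of the filtration $\left[\mathfrak{G}_k,\mathfrak{G}_l\right]\subseteq\mathfrak{G}_{k+l-1}$ obtained from Theorem~\ref{thm:anglie}, and your induction on the outermost bracket is exactly the formalization of that step. Your two points of care --- decomposing a Lie monomial as $[C',C'']$ over disjoint letter sets, and extending \R{eq:ht} from the generators $\ang{x}{k}$ to arbitrary elements of $\mathfrak{G}_k$ by bilinearity --- are precisely what the paper leaves implicit, so nothing further is needed.
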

Another useful property of this Lie algebra is that it doesn't {\em mix} terms of certain forms. We define
    \[\mathfrak{e} = \bigoplus_{k \geq 0} \mathfrak{F}_{2k}\ , \qquad \mathfrak{o} = \bigoplus_{k \geq 0} \mathfrak{F}_{2k+1}, \]
so that $\mathfrak{F} = \mathfrak{o} \bigoplus \mathfrak{e}$. The following relations are evident from (\ref{eq:anglie}),
    \begin{Eqnarray}
    \label{eq:superalgebra}
    \left[ \mathfrak{e}, \mathfrak{e} \right] \subseteq \mathfrak{o}, && \left[ \mathfrak{o}, \mathfrak{o} \right] \subseteq \mathfrak{o},\\
    \nonumber \left[ \mathfrak{e}, \mathfrak{o} \right] \subseteq \mathfrak{e}, && \left[ \mathfrak{o}, \mathfrak{e} \right] \subseteq \mathfrak{e}.
    \end{Eqnarray}
Thus, if each letter $a_i$ is either in $\mathfrak{e}$ or in $\mathfrak{o}$, the commutator $C\left(a_i, \ldots, a_n\right)$ is either in $\mathfrak{e}$ or $\mathfrak{o}$
and does not mix terms from the two. Moreover, commutators with the same letters  will fall in the same space, $\mathfrak{e}$ or $\mathfrak{o}$.
This has implications for certain numerical methods where this property translates into a matrix being either symmetric or skew-symmetric but not a mix of the two which would have resulted in unfavourable structures.

The property \R{eq:superalgebra}, in fact, says that $\mathfrak{F}$ is a $\mathbb{Z}_2$-graded Lie algebra with the degree $0$ component $\mathfrak{o}$ and degree $1$ component $\mathfrak e$.
The linear space $\mathfrak{o}$ is a Lie algebra in its own right, while structures of the form $\mathfrak{e}$ are also called Lie triple systems which are closed under double commutation:
    \[ \left[\mathfrak{e},\left[\mathfrak{e},\mathfrak{e}\right]\right] \subseteq \mathfrak{e}. \]
These notions are closely related to Lie groups and symmetric spaces which have found
applications in linear algebra methods such as the generalised polar decomposition \cite{gpd}.

As anticipated, there is a close relation between $\mathfrak{F}$ and the {\em free Lie algebra} generated by $\CCC$ and polynomials in $d$.
\begin{lemma}\label{lem:fla}
    The free Lie algebra generated by $\CCC$ and $\PPP(d)$ (the ring of polynomials in $d$ with constant coefficients) is contained in $\mathfrak{F}$,
    \[ \mathfrak{g} := \mathrm{FLA}(\CCC \cup \PPP(d)) \subseteq \mathfrak{F}. \]
    The two are identical if an inverse of the mapping $D = \ad_d: \CCC \rightarrow \CCC$ exists.
\end{lemma}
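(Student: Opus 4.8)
\textbf{Proof proposal for Lemma~\ref{lem:fla}.}

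The plan is to establish the two claims separately: first the containment $\mathfrak{g} \subseteq \mathfrak{F}$, and then the equality under the invertibility hypothesis on $D$. For the containment, I would argue that $\mathfrak{F}$ already contains the generators and is closed under the Lie bracket. The generators are $\CCC$ and $\PPP(d)$. Every $x \in \CCC$ equals $\ang{x}{0}$, so $\CCC = \mathfrak{F}_0 \subseteq \mathfrak{F}$. Every monomial $d^k$ equals $\ang{1_\AAA}{k}$ (taking $x = 1_\AAA \in \CCC$, which lies in $\CCC$ since $\CCC$ is a unital subalgebra), so $\PPP(d) \subseteq \bigoplus_k \mathfrak{F}_k = \mathfrak{F}$. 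Since Theorem~\ref{thm:anglie} shows $\mathfrak{F}$ is closed under commutators, it is a Lie algebra containing both generating sets; because $\mathfrak{g}$ is by definition the \emph{smallest} Lie algebra containing $\CCC \cup \PPP(d)$, we obtain $\mathfrak{g} \subseteq \mathfrak{F}$ immediately.

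For the reverse inclusion $\mathfrak{F} \subseteq \mathfrak{g}$ under the assumption that $D = \ad_d$ is invertible on $\CCC$, the task is to show that every generator $\ang{x}{k}$ of $\mathfrak{F}$ can be written as an iterated commutator of elements of $\CCC \cup \PPP(d)$, hence lies in $\mathfrak{g}$. I would proceed by induction on $k$. The base case $k=0$ is $\ang{x}{0} = x \in \CCC \subseteq \mathfrak{g}$. For the inductive step, the natural building block is the commutator of $\ang{x}{k}$ with the element $d = \ang{1_\AAA}{1} \in \PPP(d) \subseteq \mathfrak{g}$. Using the explicit commutator rule \R{eq:anglie} with $y = 1_\AAA$ and $l=1$, and noting that $D^j 1_\AAA = 0$ for $j \geq 1$ while $D^0 1_\AAA = 1_\AAA$, the sum collapses dramatically: only the $n=0$ term with $i=0$ survives, giving
\begin{equation*}
    \left[ \ang{x}{k}, d \right] = \lambda_{0,0}^{k,1} \ang{D x}{k},
\end{equation*}
a single term of height $k$ whose inner function is $Dx$ rather than $x$. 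I would verify from \R{eq:Asol} that the scalar $\lambda_{0,0}^{k,1} = 2\pi_{1,0}^{k,1}$ is a fixed nonzero rational (this requires a short computation of $\pi_{1,0}^{k,1}$ from the recursion, which I expect to equal a simple expression in $k$). Invertibility of $D$ then lets me replace $x$ by $D^{-1}x$, so that $\ang{x}{k}$ itself is recovered as a nonzero scalar multiple of $[\ang{D^{-1}x}{k}, d]$.

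This is not quite an induction on $k$, since the bracket with $d$ preserves height; the correct inductive quantity is rather the one supplied by \emph{height reduction}. I would instead generate $\ang{x}{k}$ from lower-height elements by bracketing against a genuine height-raiser. Observe that $\ang{y}{k+1}$ and $d^{k+1} = \ang{1_\AAA}{k+1}$ are in $\mathfrak{g}$ by induction on $k$ applied to the full statement, and consider brackets of already-constructed elements whose heights add up correctly. Concretely, the cleanest route is to show by induction on $k$ that $\mathfrak{F}_k \subseteq \mathfrak{g}$: given $\mathfrak{F}_0, \dots, \mathfrak{F}_{k-1} \subseteq \mathfrak{g}$, form the bracket $[\ang{x}{k}, \text{(something of height }1\text{)}]$ is the wrong direction, so instead I build $\mathfrak{F}_k$ up from a product structure. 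The key identity is that bracketing $d = \ang{1}{1}$ against $\ang{x}{k-1}$ lands in height $k-1$ again (the collapse above), which does not raise height. Hence the honest mechanism must use the associative product of Lemma~\ref{lem:angass} rather than the bracket alone, or use that $d^k \cdot x \in \PPP(d)\cdot\CCC$ generates the top-height piece. The main obstacle is precisely this: the bracket alone can only \emph{reduce} height, so to reach $\mathfrak{F}_k$ one must exhibit a commutator of low-height generators that produces height exactly $k$; this is impossible from \R{eq:ht} unless one brackets against the height-$k$ generator $d^k$ directly. Thus I would anchor the induction on the observation that $\ang{1_\AAA}{k} = d^k \in \PPP(d) \subseteq \mathfrak{g}$ is available for every $k$ \emph{a priori}, and then use the collapsed bracket $[\ang{x}{k}, d] = \lambda_{0,0}^{k,1}\ang{Dx}{k}$ together with invertibility of $D$ to move within $\mathfrak{F}_k$ from the known element $\ang{1_\AAA}{k}$ to an arbitrary $\ang{x}{k}$: iterating the bracket with $d$ produces $\ang{D^m 1_\AAA}{k} = 0$ for $m\geq 1$, which is useless, so the replacement must come from bracketing with a height-preserving generator that shifts the \emph{function} nontrivially, namely an element $\ang{z}{1}$ with $z\in\CCC$ non-constant. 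Computing $[\ang{x}{k},\ang{z}{1}]$ from \R{eq:anglie} and inverting the resulting $D$-dependent linear action on $\CCC$ is the crux; I expect the invertibility of $D$ to be exactly what guarantees that the map $x \mapsto$ (inner function of $[\ang{x}{k},\ang{z}{1}]$) is surjective onto $\CCC$, closing the induction and yielding $\mathfrak{F}_k \subseteq \mathfrak{g}$ for all $k$, hence $\mathfrak{F} \subseteq \mathfrak{g}$.
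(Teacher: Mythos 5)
Your first half (the containment $\mathfrak{g}\subseteq\mathfrak{F}$) is correct and coincides with the paper's argument. The second half, however, contains a genuine gap: it is a sequence of abandoned attempts ending in ``I expect\ldots closing the induction'', and the mechanism you finally settle on is circular. You propose to compute $[\ang{x}{k},\ang{z}{1}]$ and invert the map sending $x$ to the inner function of that bracket; but $\ang{x}{k}$ is precisely the element whose membership in $\mathfrak{g}$ is to be proven, so it may not appear inside a bracket at all, and $\ang{z}{1}$ for non-constant $z$ is not known to lie in $\mathfrak{g}$ either, since your base case covers only $\mathfrak{F}_0$. At the outset, the only elements you are entitled to bracket are members of $\CCC$ and the pure powers $d^m\in\PPP(d)$. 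That is the move the paper makes and you circle around without committing to: by \R{eq:anglie}, since $D^i(1_\AAA)=0$ for $i\geq1$,
\begin{equation*}
[d^{2n+1},x]\;=\;[\ang{1_\AAA}{2n+1},\ang{x}{0}]\;=\;\lambda_{0,0}^{2n+1,0}\ang{Dx}{2n}+\sum_{s=1}^{n}\lambda_{s,0}^{2n+1,0}\ang{D^{2s+1}x}{2n-2s},
\end{equation*}
so, substituting $x=D^{-1}(y)$ and solving for the top term, $\ang{y}{2n}$ is a rational combination of $[d^{2n+1},D^{-1}(y)]\in\mathfrak{g}$ and terms of strictly smaller height, which lie in $\mathfrak{g}$ by strong induction; odd heights are handled identically using $d^{2n+2}$, and the base cases are $\mathfrak{F}_0\subseteq\mathfrak{g}$ (trivial) and $\mathfrak{F}_1\subseteq\mathfrak{g}$ via $[d^2,D^{-1}(y)]=2\ang{y}{1}$. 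Note that height reduction is not the ``obstacle'' you describe but the engine: bracketing the a-priori available $d^m$ against height-zero elements loses exactly one unit of height and leaves $Dx$ as the top inner function, which invertibility of $D$ renders arbitrary.

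Two further flaws are worth flagging. Your collapsed formula $[\ang{x}{k},d]=\lambda_{0,0}^{k,1}\ang{Dx}{k}$ has both the wrong coefficient and the wrong justification: with $1_\AAA$ in the second slot of \R{eq:anglie}, the terms killed by $D^j(1_\AAA)=0$ are those with $i\neq 2n+1$, so what survives is $\sum_{n}\lambda_{n,2n+1}^{k,1}\ang{D^{2n+1}x}{k-2n}$; the collapse to the single term $-\ang{Dx}{k}$ is in fact true, but it requires knowing $\lambda_{n,2n+1}^{k,1}=0$ for $n\geq1$ (or the one-line direct computation $[x\cdot d^k+d^k\cdot x,\,d]=-Dx\cdot d^k-d^k\cdot Dx$), not merely the vanishing of derivatives of $1_\AAA$. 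Finally, your detour through ``the associative product of Lemma~\ref{lem:angass}'' is inadmissible: $\mathfrak{g}$ is a Lie algebra, closed under commutators and linear combinations only, so products such as $d^k\cdot x$ cannot be used to manufacture its elements.
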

\begin{proof}
    The containment is not difficult to prove. Since $d^k = \ang{1}{k}$, it is contained in $\mathfrak{F}_k$ and therefore $\PPP(d) \subseteq \mathfrak{F}$. The algebra $\CCC$ is also contained in $\mathfrak{F}$ since every $x \in \CCC$ can be written in the form $x = \ang{x}{0}$. The free Lie algebra $\mathfrak{g}$ generated by $\CCC \cup \PPP(d)$ is the intersection of all Lie algebras containing $\CCC$ and $\PPP(d)$ and is therefore contained in the Lie algebra $\mathfrak{F}$.

    The two algebras $\mathfrak{g}$ and $\mathfrak{F}$ are identical if inverse of the map $D$ exists. Any term in $\mathfrak{F}_0$ is of the form $\ang{y}{0}$ for some $y \in \CCC$ and therefore trivially resides in $\mathfrak{g}$. Take any $x \in \CCC$ and note that $d^2 \in \PPP(d)$. These are both contained in $\mathfrak{g}$ and by definition of being a Lie algebra $[d^2, x] = [\ang{1}{2},\ang{x}{0}] = \lambda_{0,0}^{2,0} \ang{D^0(1) D^1(x)}{1} + \lambda_{0,1}^{2,0} \ang{D^1(1) D^0(x)}{1} = 2 \ang{D(x)}{1}$ resides in $\mathfrak{g}$ as well. Consequently, any $\ang{y}{1} \in \mathfrak{F}_1$ can be expressed as $\frac12 [ d^2, D^{-1}(y)]$ so long as the inverse of $D$ exists. Thus we have $\mathfrak{F}_0,\mathfrak{F}_1 \subseteq \mathfrak{g}$. These two cases form the base case of our induction argument.

    Assume that for all $k<n$, $\mathfrak{F}_{2k}$ and $\mathfrak{F}_{2k+1}$ are contained in $\mathfrak{g}$. For completing the inductive proof, consider
    \begin{Eqnarray*}
        [d^{2n+1},x] & = & [\ang{1}{2n+1}, \ang{x}{0}] = \sum_{s=0}^n \sum_{i=0}^{2s+1} \lambda_{s,i}^{2n+1,0} \ang{D^{i}(1) D^{2s+1-i}(x)}{2n-2s} \\
        &=& \sum_{s=0}^n \lambda_{s,0}^{2n+1,0} \ang{D^{2s+1} (x)}{2n-2s} \\
        &=& \lambda_{0,0}^{2n+1,0} \ang{D(x)}{2n} + \sum_{s=1}^n \lambda_{s,0}^{2n+1,0} \ang{D^{2s+1}(x)}{2n-2s}.
    \end{Eqnarray*}
    Setting $x=D^{-1}(y) \in \CCC$,
    \[ \ang{y}{2n} = \Frac{1}{\lambda_{0,0}^{2n+1,0}} [d^{2n+1},D^{-1}(y)] - \sum_{s=1}^n \Frac{\lambda_{s,0}^{2n+1,0}}{\lambda_{0,0}^{2n+1,0}} \ang{D^{2s}(y)}{2n-2s}. \]
    The first term on the right hand side is in $\mathfrak{g}$ since $d^{2n+1} \in \PPP(d)$, while the terms in the summation fall in $\mathfrak{g}$ due to the induction hypothesis. Thus, we find that $\ang{y}{2n} \in \mathfrak{g}$ for any $y \in \CCC$. A similar proof shows that $\ang{y}{2n+1}$ also resides in $\mathfrak{g}$, proving that $\mathfrak{F}_k \subseteq \mathfrak{g}$ for every $k$. Since $\mathfrak{g}$ is also a linear space, the direct sum of these spaces, $\mathfrak{F} = \bigoplus_{k \in \mathbb{Z}^+_0} \mathfrak{F}_k$, is also contained in it. This completes our proof, $\mathfrak{g}=\mathfrak{F}$.
\end{proof}

We note that, in general, the free Lie algebra $\mathfrak{g}$ is not a subalgebra of the Lie idealiser $\III$ since $d^2$ need not be in $\III$ for every $d \in \III$.

Of less immediate and practical interest to us is the fact that $\mathfrak{F}$ is also a Jordan algebra. This follows directly and trivially from the fact that it is an associate algebra (Lemma~\ref{lem:angass}). More interestingly, it is also a $\mathbb{Z}_2$-graded Jordan algebra since, along similar lines to \R{eq:mulie},
\begin{equation}
    \label{eq:mujordan}
    \ang{x}{k} \bullet \ang{y}{l} = \sum_{n=0}^{k+l} \sum_{i=0}^{n} \gamma_{n,i}^{k,l} \ang{D^i x\, D^{n-i} y}{k+l-n},
\end{equation}
where $\gamma_{n,i}^{k,l} = (\pi_{n,i}^{k,l} + \pi_{n,n-i}^{l,k})/2$ vanishes for odd values of $n$ due to  Lemma~\ref{lem:symmetry_pi}, while $\gamma_{2n,i}^{k,l} = \pi_{2n,i}^{k,l}$ survives. Corresponding observations about the \emph{free Jordan algebra} generated by $\CCC$ and $\PPP(d)$ can also be made along similar lines, however a property corresponding to height reduction of Corollary~\ref{cor:ht} does not follow.

\section{Some applications}
\label{sec:app}
The applications of the algebras of Section~\ref{sec:lie} in solving partial differential equations such as the time-dependent \schr equation will arise from treating function spaces as commutative algebras and linear differential operators as elements of the associative algebra of endomorphisms. With this motivation, we will restrict our attention to cases where the commutative algebra $\CCC$ is isomorphic to a function space.

Let $\SSS$ be a commutative algebra isomorphic to $\CCC$ and $\Theta$ be the isomorphism between them. Anticipating the case where $\SSS$ is a function space, we will use $f,g,h$ to denote its elements. For $f \in \SSS$, we use the notation $\Theta_f$ and $\Theta(f)$ interchangeably
for the corresponding element in $\CCC$.

For an element $d\in \III$  in the Lie idealiser of $\CCC$, $D=\ad_d$ is a derivation on $\CCC$.
For any $f\in \SSS$, $D(\Theta_f) = [d, \Theta_f] \in \CCC$ so that $\tilde{d}(f) := \Theta^{-1} (D(\Theta_f))$ is an element in $\SSS$.
Since $D$ is a derivation on $\CCC$ and $\SSS$ is isomorphic to $\CCC$, $\tilde{d}$ is a derivation on $\SSS$. We say that $\tilde{d}$ is a derivation induced by $d$. The element of the Lie idealiser that we will require in the following sections will be the differential operator $d=\dx$, which will end up coinciding with the induced derivation $\tilde{d}$.

Our first example of $\Theta$ will be $\MMM$, the left multiplication map, which we encounter while solving the TDSE. In the case of the Wigner equation, the isomorphism $\Theta$ is more complicated and maps functions to pseudo-differential operators.

\subsection{Semiclassical time-dependent \schr equation (TDSE)}
The linear time-dependent \schr equation (TDSE),
\begin{equation*}
    \ii \hbar \partial_t u(x,t) = - \frac{\hbar^2}{2m} \dx^2 u(x,t) + V(x) u(x,t), \quad x\in \mathbb{R},\ t\geq 0,\ u(x,0) = u_0(x),
\end{equation*}
is a fundamental equation of quantum mechanics. The reduced Planck's constant, $\hbar \approx 1.054 \times 10^{-34}\, \mathrm{J}\cdot \mathrm{s}$, is a truly minute number which would typically result in considerable difficulties as far as numerical solutions are concerned. However, when working in atomic units, we can re-scale $\hbar =1$. In these units the mass of an electron is $1$. The typical length scales in these units are $10^{-11}\, \mathrm{m}$, while typical time and mass scales are $10^{-17} \mathrm{s}$ and $10^{-30}\, \mathrm{kg}$, respectively. Consequently, working in the atomic units restricts us to extremely small spatio-temporal windows.

When computation for any larger spatio-temporal windows is required, we arrive at the TDSE under the semi-classical scaling,
\begin{equation}
    \label{eq:TDSE}
    \ii \ve \partial_t u(x,t) = - \ve^2 \dx^2 u(x,t) + V(x) u(x,t),\quad x\in \mathbb{R},\ t\geq 0,\ u(x,0) = u_0(x).
\end{equation}
The \emph{semiclassical} parameter $\ve$ plays a role similar to the reduced Planck's constant. Although it is very small, $0 < \ve \ll 1$, the semiclassical parameter is considerably larger than the Planck's constant and the range $10^{-8} \leq \ve \leq 10^{-2}$ isn't unrealistic.

The equation \R{eq:TDSE} also arises out of the {\em Born--Oppenheimer approximation} of the molecular \schr equation.
In this case it describes the evolution of the nuclei in the electric potential, $V(x)$. Working in the atomic units, $\ve$ is the square root of the ratio of the mass of an electron and the heaviest nucleus. When larger spatio-temporal windows are required, $\ve$ can decrease further in size as before.

Typical computational methods for solving this equation commence with imposition of periodic boundaries at $\pm1$ followed by a discretisation of space.
The infinite dimensional operators $\dx$ and $V$ are replaced by matrices $\KKK$ and $\DDD_V$, respectively. The TDSE is now
replaced by a system of ODEs,
    \[ \partial_t \MM{u}(t) = (\ii \ve \KKK^2 - \ii \ve^{-1} \DDD_V)\ \MM{u}(t),\quad t\geq 0,\ \MM{u}(0) = \MM{u_0}, \]
where $\MM{u}$ is the vector of values at grid points. The exact solution of this equation can be written as
    \[ \MM{u}(t) = \exp(\ii t \ve \KKK^2 - \ii t \ve^{-1} \DDD_V)\ \MM{u_0}. \]
Methods for evaluating the exact matrix exponential for this matrix are prohibitively expensive and one resorts to splitting methods such as the Trotter splitting,
    \[ \exp(\ii t \ve \KKK^2 - \ii t \ve^{-1} \DDD_V) = \exp(\ii t \ve \KKK^2) \exp(-\ii t \ve^{-1} \DDD_V) +\OO{t^2}, \]
or the Strang splitting,
\begin{equation}
    \label{eq:strang}
    \exp(\ii t \ve \KKK^2 - \ii t \ve^{-1} \DDD_V) = \exp\left(\Frac12  \ii t \ve \KKK^2\right) \exp(-\ii t \ve^{-1} \DDD_V) \exp\left(\Frac12 \ii t \ve \KKK^2\right) +\OO{t^3}.
\end{equation}
Methods with higher-order accuracy than the Strang splitting can be created using Yoshida composition \cite{yoshida}. However the number of exponentials
grows exponentially with the order desired. Other methods are based on the \emph{symmetric Baker-Campbell-Hausdorff} (sBCH) formula \cite{dynkin47eot,casas09aea},
\begin{equation*}
    \exp\left(\Frac12 t A\right) \exp(t B) \exp\left(\Frac12 t A\right) = \exp(\mathrm{sBCH}(tA, tB)),
\end{equation*}
where
\begin{equation}
    \label{eq:sBCHcoeffs}
    \CC{sBCH}(t A,t B) = t (A+B) - t^3(\Frac{1}{24}[[B,A],A]+\Frac{1}{12}[[B,A],B]) +\OO{t^5}.
\end{equation}
These require the evaluation of nested commutators of the matrices $\KKK^2$ and $\DDD_V$ such as $[[\DDD_V, \KKK^2], \KKK^2]$, which is prohibitive and therefore typically avoided.

In \cite{bader14eaf} we develop arbitrarily high order methods by working directly within the free Lie algebra generated by the undiscretised operators $\dx^2$ and $V$.
The cost of these methods grows quadratically in contrast to Yoshida methods. Additionally, these methods are commutator-free and preserve unitary evolution
of the wave function---a fundamental principle of quantum mechanics. In this section we discover how the remarkable properties of symmetric Zassenhaus methods of \cite{bader14eaf} arise from the structural properties of (a special case of) the Lie algebra $\mathfrak{F}$.

\subsubsection{Notation}
Let $\SSS \subseteq \TTT$ be real and complex valued function spaces, respectively. Endomorphisms on $\TTT$ form an associative algebra $\AAA=(\mathrm{End}(\TTT),\circ,+)$ where $\circ$ is operator composition. For convenience, we consider $\SSS = \CC{C}_p^\infty([-1,1]; \mathbb{R})$ and
$\TTT = \CC{C}_p^\infty([-1,1]; \mathbb{C})$,
the space of smooth periodic functions over $[-1,1]$ with values in $\mathbb{R}$ and $\mathbb{C}$, respectively.
Let $M: \SSS \rightarrow \mathrm{End}(\TTT)$ be the left multiplication map,
\[ M(f)(g) = f g \in \TTT, \qquad f \in \SSS,\ g \in \TTT. \]
The image of $\SSS$ under $M$,
\[ \CCC = M(\SSS), \]
is a commutative algebra of multiplication operators with $M$ acting as an isomorphism between $\SSS$ and $\CCC$.
We also write $M_f$ to denote the operator $M(f)$ whose action is that of multiplying by $f$.
The partial differentiation operator $\partial_x$
is an element of the Lie idealiser of $\CCC$ in the associative operator algebra $(\mathrm{End}(\TTT), \circ, +)$,
\[ [\partial_x, M_f] = \partial_x \circ M_f - M_f \circ \partial_x = M_{\partial_x f} + M_f \circ \partial_x - M_f \circ \partial_x = M_{\partial_x f} \in \CCC, \]
where the commutator is the canonical Lie product as usual. Following our terminology, the operator $\dx \in \mathrm{End}(\TTT)$
induces the derivation $\tilde{\dx} = \dx$ on $\SSS$. Here the induced derivation $\tilde{d}$ overlaps with the element $d$, but this need not be the case in general. To be very precise, $\tilde{\dx}$ is an operator on $\SSS$ while $\dx$ is an operator on $\TTT$, but the two coincide on $\SSS$.


Lemma~\ref{lem:fla} directly allows us to conclude that
\[ \mathrm{FLA}(\MMM(\SSS) \cup \mathcal{P}(\partial_x) )\ = \ \mathfrak{F}^{\dx} = \bigoplus_{k \in \mathbb{Z}^+_0} \{ \ang{\MMM_f}{k}^{\dx} \ :\ f \in \SSS\}. \]
Here $D^{-1}$ is isomorphic to $\tilde{d}^{-1}$ which is the inverse of differentiation on the space $\SSS$. Assuming $\SSS$ is closed under integration, therefore, an appropriate inverse of $D$ exists and  the free Lie algebra of multiplicative operators $\MMM(\SSS)$ and polynomials (with constant coefficients) in the differential operators, $\mathcal{P}(\partial_x)$, is characterised completely by $\mathfrak{F}^{\dx}$.

For convenience, we abuse notation and write
\[ \ang{f}{k} = \ang{M_f}{k}^{\dx} = \frac12 \left( M_f \circ \partial_x^k + \partial_x^k \circ  M_f \right). \]
In this notation
\[ \ang{1_{\SSS}}{k} = \partial_x^k, \qquad \ang{f}{0} = M_f, \]
where $1_{\SSS}$ is the constant function over $\SSS$ with value $1$.

\subsubsection{Consequences for exponential splitting schemes}
The semiclassical TDSE \R{eq:TDSE},
\[ \partial_t u = \ii \ve \partial_x^2 u - \ii \ve^{-1} V u, \]
can be written in the form
\[ \partial_t u = \ii (\ve \ang{1}{2}  - \ve^{-1} \ang{V}{0}) u, \]
where the unit in $\ang{1}{2}$ is understood to be $1_\SSS$. We can find the solution of this equation by formally exponentiating the Hamiltonian without discretisation,
\[ u(t) = \exp(\ii t \ve \ang{1}{2} - \ii t \ve^{-1} \ang{V}{0})\ u(0). \]
We may now perform splittings directly on this undiscretised Hamiltonian.

Methods which featured nested matrix commutators such as $[[\DDD_V, \KKK^2], \KKK^2]$
now feature corresponding commutators of operators $\ang{1}{2}$ and $\ang{V}{0}$, such as $[[\ang{V}{0}, \ang{1}{2}], \ang{1}{2}]$. Since the two operators reside in our Lie algebra,
\[ \ang{1}{2}, \ang{V}{0} \in \mathfrak{F} =  \bigoplus_{k \in \mathbb{Z}^+_0}\  \{ \ang{f}{k}\ :\ f \in \SSS\},\]
so do all of their commutators. Moreover, as a consequence of Theorem~\ref{thm:anglie}, commutators in this Lie algebra can be solved explicitly,
\begin{equation}
    \label{eq:angdx}
    \left[\ang{f}{k},\ang{g}{l}\right] = \sum_{n=0}^{\frac{k+l-1}{2}} \sum_{i=0}^{2n+1} \lambda_{n,i}^{k,l} \ang{(\dx^i f) (\dx^{2n+1-i} g)}{k+l-2n-1},
\end{equation}
where $\lambda$s are the same as before.
This allows us to design commutator-free methods by explicitly working out nested commutators such as $[[\ang{V}{0}, \ang{1}{2}], \ang{1}{2}]$
and $[[\ang{V}{0}, \ang{1}{2}], \ang{V}{0}]]$ appearing in the sBCH of $\ii t \ve \ang{1}{2}$ and $-\ii t \ve^{-1} \ang{V}{0}$ using the rules,
    \begin{Eqnarray}
        \nonumber\left[ \Ang{2}{f}, \Ang{1}{g} \right] & =&  - \Ang{0}{(\dx f) (\dx^2 g) + \Frac12 f (\dx^3 g)} + \Ang{2}{2 f (\dx g) - (\dx f) g},\\
        \nonumber\left[ \Ang{2}{f}, \Ang{0}{g} \right] & =& 2 \Ang{1}{f (\dx g)},\\
        \nonumber\left[ \Ang{1}{f}, \Ang{0}{g} \right] & =& \Ang{0}{f (\dx g)},
    \end{Eqnarray}
which can be read off Table~\ref{tab:lambda} in Appendix~\ref{app:tables}. Consequently,
    \begin{Eqnarray}
        \nonumber\left[ \Ang{0}{V}, \Ang{2}{1} \right] & =& -2 \ang{\dx V}{1},\\
        \nonumber[[ \ang{V}{0}, \ang{1}{2}], \ang{V}{0}]] & =& -2 \ang{(\dx V)^2}{0},\\
        \nonumber[[ \ang{V}{0}, \ang{1}{2}], \ang{1}{2}] & =& -\ang{\dx^4 V}{0} + 4 \ang{\dx^2 V}{2}.
    \end{Eqnarray}
The sBCH of  $\ii t \ve \ang{1}{2}$ and $-\ii t \ve^{-1} \ang{V}{0}$ up to $\OO{t^5}$, for instance, is
\[ \ii t \ve \ang{1}{2} -\ii t \ve^{-1} \ang{V}{0} -\Frac{1}{6} \ii t^3 \ve  \ang{\dx^2 V}{2} + \Frac{1}{24} \ii t^3 \ve \ang{\dx^4 V}{0} - \Frac{1}{6} t^3 \ve^{-1} \ang{(\dx V)^2}{0}. \]
Another favourable consequence of Theorem~\ref{thm:anglie} is that all terms in the sBCH of $\ii t \ve \ang{1}{2}$ and $-\ii t \ve^{-1} \ang{V}{0}$
reside in
    \[\mathfrak{H} = \bigoplus_{k \in \mathbb{Z}^+_0}\ \{ \ii^{k+1} \ang{f}{k}\ :\ f\in \SSS \}, \]
whose elements are skew-Hermitian operators. Upon discretisation using pseudospectral methods, these are replaced by
    \[ \ii^{k+1} \ang{f}{k} \rightsquigarrow \Frac12 \ii^{k+1} (\DDD_f \KKK^k + \KKK^k \DDD_f), \]
where $\DDD_f$ is a diagonal matrix and $\KKK$ a skew-symmetric circulant differentiation matrix. Consequently, the discretised forms of elements of $\mathfrak{H}$ are skew-Hermitian matrices,
the exponentials of which are unitary matrices.
Zassenhaus splittings \cite{bader14eaf} feature exponentials of terms in $\mathfrak{H}$, thereby guaranteeing unitary evolution of the wavefunction and resulting in unconditional stability of these numerical methods.


It is well known that solutions of the semiclassical TDSE \R{eq:TDSE}---irrespective of smooth initial conditions---develop oscillations of frequency
$\OO{\ve^{-1}}$ in both space and time \cite{jin11mac}. These oscillations make computational solutions very costly since one typically needs $M = \OO{\ve^{-1}}$
grid points to resolve spatial oscillations and time steps of size $\OO{\ve}$ to resolve temporal oscillations.

The spectral radius of the differentiation matrix $\KKK$ scales as $\OO{M}$, i.e. $\OO{\ve^{-1}}$. Keeping eventual discretisation in mind and noting that
\[ \ang{f}{k} \rightsquigarrow \Frac12  (\DDD_f \KKK^k + \KKK^k \DDD_f) = \OO{\ve^{-k}}, \]
we use the shorthand $\ang{f}{k} = \OO{\ve^{-k}}$. It is now easy to see how the height of a term $a \in \mathfrak{G}_n$
acts as a proxy for the spectral radius of the eventual discretisation,
\begin{equation*}
 a \in \GG{G}_n \Longleftrightarrow \mathrm{ht}(a) \leq n\ \Longrightarrow\ a = \OO{\ve^{-n}}.
\end{equation*}
Since the matrix $\KKK^2$ scales as $\OO{\ve^{-2}}$, one would expect the commutator $[[\DDD_V, \KKK^2], \KKK^2]$
to scale as $\OO{\ve^{-4}}$. The undiscretised commutator $[[\ang{V}{0}, \ang{1}{2}], \ang{1}{2}]$, on the other hand, scales as $\OO{\ve^{-2}}$ thanks to
the property of height reduction (Corollary~\ref{cor:ht}).

\begin{cor}
    \label{cor:ve}
    \normalfont
    Commutators of $A = \ii \ve \ang{1}{2}$ and $B = \ii \ve^{-1} \ang{V}{0}$ are  of size $\OO{\ve^{-1}}$.

    \begin{proof}
    Consider a grade $n$ commutator $C$ of $\ang{1}{2}$ and $\ang{V}{0}$, featuring $k$ occurrences of the letter $\ang{1}{2}$ and $n-k$ occurrences of $\ang{V}{0}$. Since $\ang{1}{2} \in \mathfrak{G}_2$ and $\ang{V}{0} \in \mathfrak{G}_0$, using Corollary~\ref{cor:ht},
    \[ ht(C) \leq 2k - n + 1. \]
     Thus $C$ is $\OO{\ve^{-2k+n-1}}$. However, the corresponding commutator of $A$ and $B$ is scaled by $k$ occurrences of $\ve$ and $n-k$ occurrences of $\ve^{-1}$, bringing its size to $\OO{\ve^{-1}}$.
    \end{proof}
\end{cor}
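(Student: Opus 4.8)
The plan is to combine the height-reduction bound of Corollary~\ref{cor:ht} with the $\ve$-scaling rule $a \in \mathfrak{G}_n \Rightarrow a = \OO{\ve^{-n}}$ recorded just above the statement, while tracking separately the scalar prefactors $\ii\ve$ and $\ii\ve^{-1}$ attached to the two letters $A$ and $B$. The whole point is that the grade-dependent growth coming from the scalars is exactly cancelled by the grade-dependent gain coming from height reduction.

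First I would fix an arbitrary commutator word $C$ built from $n$ letters, each equal to $A = \ii\ve\ang{1}{2}$ or $B = \ii\ve^{-1}\ang{V}{0}$, and suppose it contains $k$ copies of $A$ and $n-k$ copies of $B$. Since the bracket is bilinear in each slot, all scalars factor out of the nested commutator, giving $C = \ii^n \ve^{k}\ve^{-(n-k)}\,\tilde C = \ii^n \ve^{2k-n}\,\tilde C$, where $\tilde C$ is the same commutator word in the bare letters $\ang{1}{2}$ and $\ang{V}{0}$.

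Next I would bound the height of $\tilde C$. Since $\ang{1}{2} \in \mathfrak{G}_2$ and $\ang{V}{0} \in \mathfrak{G}_0$, Corollary~\ref{cor:ht} gives $\mathrm{ht}(\tilde C) \leq 2k + 0\cdot(n-k) - n + 1 = 2k - n + 1$, so $\tilde C \in \mathfrak{G}_{2k-n+1}$ and hence $\tilde C = \OO{\ve^{-(2k-n+1)}}$. Reinstating the prefactor yields $C = \ve^{2k-n}\cdot\OO{\ve^{-(2k-n+1)}} = \OO{\ve^{-1}}$. The exponents cancel exactly, so the bound is the uniform $\OO{\ve^{-1}}$ irrespective of the values of $k$ and $n$ or of the nesting pattern of the word, which is the assertion.

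The step carrying the real content is the appeal to height reduction. Without it one would estimate each factor $\ang{1}{2}$ crudely as $\OO{\ve^{-2}}$ and conclude only $C = \OO{\ve^{-(2k-1)}}$, a bound that degrades with the number of kinetic letters; it is precisely the $-n+1$ gain in Corollary~\ref{cor:ht} that compensates the $\ve^{2k-n}$ scalar prefactor and collapses every grade to the same order. The remaining work is merely the bookkeeping of the scalar factors and presents no obstacle.
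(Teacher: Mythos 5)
Your proof is correct and takes essentially the same route as the paper: both apply Corollary~\ref{cor:ht} to the bare commutator word in $\ang{1}{2}$ and $\ang{V}{0}$ to get $\mathrm{ht} \leq 2k-n+1$, then reinstate the scalar prefactor $\ve^{2k-n}$ coming from the $k$ copies of $\ii\ve$ and $n-k$ copies of $\ii\ve^{-1}$, observing the exact cancellation to $\OO{\ve^{-1}}$. Your explicit bilinearity step for factoring out the scalars is merely a spelled-out version of what the paper states in one line.
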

When we scale the time step as $t = \OO{\ve^{\sigma}}$ for some $\sigma >0$, a grade $n$ commutator of $A = \ii t \ve \ang{1}{2}$ and $B = \ii t \ve^{-1} \ang{V}{0}$,
 scales as $\OO{\ve^{n \sigma -1}}$ in contrast to the $\OO{\ve^{n(\sigma -1)}}$ scaling seen when working with commutators of matrices.
Whereas the latter requires $\sigma > 1$ for convergence of sBCH (which corresponds to very small time steps), our approach does not have such
restrictions and allows convergence for extremely large time steps. With time steps of size $\OO{\sqrt{\ve}}$, which corresponds to $\sigma=1/2$, for instance,
grade $n$ commutators are $\OO{\ve^{n/2-1}}$ and the sBCH series easily converges.

The property of height reduction underlies the asymptotic splitting of the symmetric Zassenhaus kind where it results in inexpensive exponentiations
via Lanczos iterations \cite{bader14eaf}. The advantages of height reduction also extend to error analysis for existing methods such as Yoshida splittings
since commutators discarded in these methods can be assumed to be undiscretised and analysed appropriately.
An order six Yoshida splitting discards all commutators of grade seven and higher, for instance, thereby committing an error of $\OO{\ve^{7\sigma-1}}$.

Despite this, the quadratic cost of the
Zassenhaus splitting, which results from a systematic exploitation of the structures of $\mathfrak{F}$ and the property of height reduction, trumps the exponential cost of Yoshida.

\begin{theorem}Cost of Zassenhaus splittings grows quadratically in the order desired.
\label{thm:quadratic}
\end{theorem}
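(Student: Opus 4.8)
The plan is to bound the total cost of an order-$p$ symmetric Zassenhaus splitting \cite{bader14eaf} as the product of two factors, each linear in $p$: the number of exponentials appearing in the splitting, and the cost of evaluating a single one. Throughout I take the cost unit to be the fast Fourier transform, since after pseudospectral discretisation on $M$ grid points every exponential is realised through applications of the skew-symmetric circulant differentiation matrix $\KKK$, each application costing one FFT pair, and it is the $\OO{M \log M}$ FFTs that dominate the arithmetic.

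First I recall the structure of the splitting. To reach order $p$ one rewrites $\exp(\mathrm{sBCH}(\ii t \ve \ang{1}{2}, -\ii t \ve^{-1} \ang{V}{0}))$ as a symmetric product $\cdots e^{\frac12 \Omega_5} e^{\frac12 \Omega_3} e^{\Omega_1} e^{\frac12 \Omega_3} e^{\frac12 \Omega_5} \cdots$, where $\Omega_m$ collects every grade-$m$ term of the sBCH. Because the sBCH \R{eq:sBCHcoeffs} contains only odd powers of $t$ --- equivalently, because the even-indexed coefficients vanish by Lemma~\ref{lem:symmetry_pi} and \R{eq:superalgebra} --- only the odd grades $m = 1, 3, 5, \ldots$ survive, and attaining order $p$ requires retaining grades up to $m = \OO{p}$. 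Hence the splitting contains $\OO{p}$ exponentials, the first linear factor.

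Next I bound the cost of a single exponential $e^{c \Omega_m}$, which rests on two ingredients. By Height Reduction (Corollary~\ref{cor:ht}) every grade-$m$ term has height at most $\OO{m}$, so after discretisation $\Omega_m$ is a matrix of the form $\sum_{j=0}^{\OO{m}} \DDD_{f_j} \KKK^j$; forming $\KKK v, \KKK^2 v, \ldots$ to apply it to a vector costs $\OO{m}$ FFTs. Moreover, by the $\ve$-scaling recorded after Corollary~\ref{cor:ve}, with $t = \OO{\ve^{\sigma}}$ each surviving $\Omega_m$ has spectral radius $\OO{\ve^{m\sigma - 1}}$, which is small for every retained grade $m \geq 3$; consequently the Lanczos iteration for the exponential of the skew-Hermitian matrix $c \Omega_m$ reaches the target tolerance in a number of iterations bounded uniformly in $\ve$ and $p$. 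The lone exception is the leading factor $e^{\Omega_1}$, whose kinetic and potential parts are exponentiated exactly --- by diagonalisation in Fourier space and a diagonal multiplication, respectively --- at cost $\OO{1}$. Thus the $j$-th exponential, carrying grade $m = 2j-1$, costs $\OO{j}$ FFTs.

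Summing over the $\OO{p}$ exponentials yields total cost $\sum_{j=1}^{\OO{p}} \OO{j} = \OO{p^2}$, the asserted quadratic growth. I expect the technical heart --- and the main obstacle --- to be the rigorous justification that the number of Lanczos iterations per correction is bounded uniformly in $\ve$ and $p$. This hinges entirely on Height Reduction keeping each correction at size $\OO{\ve^{m\sigma - 1}}$ rather than the $\OO{\ve^{m(\sigma-1)}}$ delivered by a \naive matrix-commutator count: without it the corrections would not be uniformly small, the Krylov dimension would grow, and the per-exponential cost --- and with it the quadratic estimate --- would be destroyed. Making this uniform Krylov convergence precise, while checking that the bounded iteration count still preserves the overall order of accuracy, is where the real work lies.
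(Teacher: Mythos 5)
Your overall architecture matches the paper's: FFTs as the cost unit, $\OO{p}$ exponentials, height reduction (Corollary~\ref{cor:ht}) to bound the per-matrix-vector-product cost of the $k$-th exponent by $\OO{k}$ FFTs, and Lanczos iterations for the outer exponentials. However, there is a genuine error in the step you yourself flag as the technical heart: the number of Lanczos iterations is \emph{not} bounded uniformly in $p$, and it cannot be. To preserve the splitting's overall accuracy $\OO{\ve^{(2n+3)\sigma-1}}$, each exponential $\ee^{W^{[k]}}$ must be approximated to that same, $n$-dependent tolerance; since $W^{[k]} = \OO{\ve^{(2k-1)\sigma-1}}$, a Krylov approximation needs roughly $\left\lceil \frac{(2n+3)\sigma-1}{(2k-1)\sigma-1} \right\rceil$ iterations, which for fixed small $k$ grows \emph{linearly} in $n$ (e.g.\ for $\sigma=1$ and $k=2$ one needs about $n+1$ iterations). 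So the uniform Krylov bound you hope to establish is false, not merely difficult: as the order increases, the tolerance tightens faster than the corrections shrink.

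The quadratic conclusion nevertheless survives, but for a reason your accounting misses and the paper's exploits: there is a trade-off between the two factors. The iteration count $\approx \frac{(2n+3)\sigma-1}{(2k-1)\sigma-1}$ \emph{decreases} in $k$ exactly as the per-product cost $4(k-1)$ \emph{increases} in $k$, so their product is $\OO{n}$ uniformly over $k$ --- not the $\OO{k}$ with bounded iterations that you claim. Each of the $\OO{n}$ nontrivial exponentials therefore costs $\OO{n}$ FFTs (not $\OO{k}$), and the paper's explicit count
\begin{equation*}
C_{\sigma}(n) = 4 + 2 \sum_{k=2}^{n} 4 (k-1) \left\lceil \frac{(2n+3)\sigma-1}{(2k-1)\sigma-1} \right\rceil + 4 n \left\lceil \frac{(2n+3)\sigma-1}{(2n+1)\sigma-1} \right\rceil \sim 12n^2 \quad (\sigma = 1)
\end{equation*}
makes this precise. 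The repair to your argument is thus not to prove uniform Krylov convergence but to abandon it: let the iteration count for the $k$-th exponent scale as the ratio of the target accuracy to $\|W^{[k]}\|$, and observe that height reduction keeps the product of (iterations) $\times$ (FFTs per product) linear in $n$ for every $k$.
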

    \begin{proof} \normalfont
   Zassenhaus splittings \cite{bader14eaf} recursively utilise the sBCH formula featuring odd grade commutators of $\ii t \ve \ang{1}{2}$ and $-\ii t \ve^{-1} \ang{V}{0}$.  Due to Corollary~\ref{cor:ve}, any grade $n$ commutator of $\ii t \ve \ang{1}{2}$ and $-\ii t \ve^{-1} \ang{V}{0}$ is $\OO{\ve^{n\sigma-1}}$.  Splittings with $\OO{\ve^{(2n+3)\sigma -1}}$ error, therefore, require commutators of grades up to  $2n+1$.
    Such a splitting has the form,
    \begin{equation*}
      \label{eq:splitting_FoCM}
      \ee^{\ii t (\ve \ang{1}{2}- \ve^{-1} \ang{V}{0})}=
      \ee^{\frac12 W^{[0]}}\cdots \ee^{\frac12 W^{[n]}}\ee^{W^{[n+1]}} \ee^{\frac12 W^{[n]}}\cdots \ee^{\frac12 W^{[0]}}+\OO{\ve^{(2n+3)\sigma-1}},
    \end{equation*}
    where $W^{[0]}= \ii t \ve \ang{1}{2}$ and $W^{[1]}=-\ii t \ve^{-1} \ang{V}{0}$, both of which are $\OO{\ve^{\sigma-1}}$. For $k \geq 2$, $W^{[k]}$ arises from grade $2k-1$ commutators of $\ii t \ve \ang{1}{2}$ and $-\ii t \ve^{-1} \ang{V}{0}$, and is, therefore, of size $\OO{\ve^{(2k-1)\sigma-1}}$. We refer the reader to \cite{bader14eaf} for details of these splittings.

    The first exponent, $W^{[0]}$, is discretised as a circulant matrix, $\ii t \ve \KKK^2$, whose exponential can be evaluated using two Fast Fourier Transforms (FFTs), each of which costs $\OO{M \log M} = \OO{\ve^{-1} \log(\ve^{-1})}$. The second exponent, $W^{[1]}$, is discretised as a diagonal matrix, $-\ii t \ve^{-1} \DDD_{V}$, and can be exponentiated directly.

    Remaining exponents require Lanczos iterations. Since $W^{[k]}$ is $\OO{\ve^{(2k-1)\sigma-1}}$, we need $\left\lceil \frac{(2n+3)\sigma-1}{(2k-1)\sigma-1} \right\rceil$ iterations for approximating its exponential to $\OO{\ve^{(2n+3)\sigma -1}}$ accuracy \cite{bader14eaf}. Each of these iterations require the evaluation of matrix-vector products of the form $W^{[k]} u$.

    Since $W^{[k]}$ arises from grade $2k-1$ commutators of the terms $\ii t \ve \ang{1}{2} \in \mathfrak{G}_2 \cap \mathfrak{e}$ and $-\ii t \ve^{-1} \ang{V}{0} \in \mathfrak{G}_0 \cap \mathfrak{e}$, it resides in the intersection of $\mathfrak{G}_{2k-2}$ (at most) and $\mathfrak{e}$. Thus, it consists of terms of the form $\ang{f_0}{0},\ang{f_2}{2},$ $\ang{f_4}{4},\ldots,\ang{f_{2k-2}}{2k-2}$. To evaluate $W^{[k]} u$, we need to evaluate $\ang{f_0}{0} u,\ang{f_2}{2} u$, $\ang{f_0}{4} u,\ldots,\ang{f_{2k-2}}{2k-2} u$. The first of these, $\ang{f_0}{0} u$, is a pointwise product, while the rest require four FFTs each since $\ang{f}{k}$ is discretised as $(\DDD_f \KKK^2 + \KKK^2 \DDD_f)/2$.

    The cost of the splitting is dominated by the cost of FFT operations, each of which requires $\OO{\ve^{-1} \log(\ve^{-1})}$ operations. Since each evaluation of $W^{[k]}u$ requires $4(k-1)$ FFTs, the number of FFTs required per time step of the splitting scheme is
    \[ C_{\sigma}(n) = 4 + 2 \sum_{k=2}^{n} 4 (k-1) \left\lceil \frac{(2n+3)\sigma-1}{(2k-1)\sigma-1} \right\rceil +  4 n \left\lceil \frac{(2n+3)\sigma-1}{(2n+1)\sigma-1} \right\rceil ,\]
    which grows quadratically. Under $\sigma = 1$, for instance, this grows as $\sim12n^2$,
    \begin{Eqnarray*}
        C_1(n) &=& 4 + 2 \sum_{k=2}^{n} 4 (k-1) \left\lceil \frac{2n+2}{2k-2} \right\rceil +  4 n \left\lceil \frac{2n+2}{2n} \right\rceil \\
        &\leq & 4+ 8 \sum_{k=2}^{n}(k-1) \left(\frac{2n+2}{2k-2}+1\right)+  8n \\
        &=& 4+ 8 \sum_{k=2}^{n}(n+k)+  8n = 12 n^2 + 4n -4.
    \end{Eqnarray*}
    The overall cost for an $\OO{\ve^{(2n+3)\sigma -1}}$ splitting is $\OO{C_\sigma(n) \ve^{-1} \log{\ve^{-1}}}$ per time step. We remark that careful choices can allow us to reduce the exact number of FFTs required. However, we are largely concerned with asymptotic growths here.

    Since this proof does not assume a specific form of the exponents, $W^{[k]}$s, and works solely on the basis of the structure of $\mathfrak{G}_k$ and $\mathfrak{e}$, the odd graded nature of the sBCH formula and the height reduction of Corollary~\ref{cor:ht}, the cost estimates derived here also apply directly to Magnus--Zassenhaus approaches of \cite{IKS,MKO} discussed in Section~\ref{sec:Vt}.
\end{proof}

\subsubsection{Some related algebras}
The gain of powers of $\ve$ has also been observed by \citeasnoun{lassergaim} using Moyal brackets
in the phase space. However, the analysis is not as easily generalised as the algebraic approach here.

These ideas are closely related to the Weyl algebra \cite{Dixmier1968,CoutinhoWeyl} which is the universal enveloping algebra of the Lie algebra of Heisenberg groups. The univariate Weyl algebra can be written in the form
\[ \mathfrak{W} = \left\{ \sum_{k=0}^n f_k(x) \dx^k \ : \ f_k \in \PPP(x) ,\ k \in \mathbb{Z}^+_0  \right\}. \]
These are special cases of a more general form
\[ \mathfrak{S} = \left\{ \sum_{k=0}^n f_k(x) \dx^k \ : \ f_k \in \SSS ,\ k \in \mathbb{Z}^+_0  \right\}, \]
under the choice of polynomials in $x$, $\PPP(x)$, as the function space $\SSS$.

We note that $\mathfrak{S}$ is contained within $\mathfrak{F}$ as an associative algebra since $\MMM(f_k), \dx^k \in \mathfrak{F}$, while, in the other direction, $\mathfrak{F} \subseteq \mathfrak{S}$ is evident using the Leibniz rule. Thus, the two are identical as associative algebras. It is not difficult to prove that a height reduction phenomenon also holds for $\mathfrak{S}$.

However, there is a significant advantage to working in the symmetrised form of $\mathfrak{F}$ since a $\mathfrak{S}$ lacks a $\mathbb{Z}_2$-graded structure.
The $\mathbb{Z}_2$ grading on $\mathfrak{F}$ and the symmetric nature of its elements ensures that elements of $\mathfrak{H}$ are skew-Hermitian after discretisation. This proves crucial for devising stable numerical schemes such as the symmetric Zassenhaus schemes of \cite{bader14eaf} once we start utilising nested commutators. In contrast, lacking a $\mathbb{Z}_2$ grading, a clean separation of terms does not occur in the form $\mathfrak{S}$.

One could argue that any nested commutator of skew-Hermitian operators in $\mathfrak{S}$ should be skew-Hermitian. This is indeed true prior to discretisation. However, unless some highly specialised differentiation matrices can be constructed, the discretised version of the commutator simplified in $\mathfrak{S}$ possesses no such structure. What is more, even prior to discretisation we prefer to discard terms smaller than a certain size (while analysing in powers of $\ve$). Working in $\mathfrak{S}$ instead of $\mathfrak{F}$, it becomes difficult to discern which components of a term such as $\sum_{k=0}^n f_k(x) \dx^k$ can be discarded and which need to be kept despite their small size in order to preserve the skew-Hermiticity of the undiscretised operator.

\subsection{Time-varying potentials in the TDSE}
\label{sec:Vt}
Consider the TDSE with time-varying electric potential, $V(x,t)$, written in the form
\[ \partial_t u = \AAA(t) u, \]
where $\AAA(t) = \ii \ve \ang{1}{2} - \ii \ve^{-1} \ang{V(t)}{0} \in \mathfrak{H}$. The solution of such Lie group equations can be expressed using the Magnus expansion \cite{magnus54ote},
\[ u(t) = \ee^{\Omega(t)} u(0), \]
where the infinite series $\Omega(t)=\sum_{k=1}^\infty\Omega_k(t)$, is an element of the underlying Lie algebra $\mathfrak{H}$.
The exponent $\Omega(t)$ satisfies the \emph{dexpinv equation} \cite{iserles99ots},
\begin{equation}
\label{eq:dexpinv}
\Omega'(t) = \dexp^{-1}_{\Omega(t)}(\AAA(t)) = \sum_{k=0}^\infty \frac{B_k}{k!} \ad_{\Omega(t)}^k(\AAA(t)), \qquad \Omega(0)=0,
\end{equation}
where $B_k$ are the Bernoulli numbers. The solution of (\ref{eq:dexpinv}), originally proposed as an infinite series using Picard iterations \cite{magnus54ote},
has been widely analysed in  \cite{iserles99ots,iserles00lgm,blanes09tme}.

The graded version of the Magnus expansion \cite{iserles99ots} reads
\begin{align*}\label{eq:SymMag}
\Omega(t)=&\int_0^t\AAA(\xi)d\xi -\frac{1}{2}\int_0^t\int_0^{\xi_1}[\AAA(\xi_2),\AAA(\xi_1)]d\mathbf{\xi}\\
&+\frac{1}{12}\int_0^t\int_0^{\xi_1}\int_0^{\xi_1}[\AAA(\xi_2),[\AAA(\xi_3),\AAA(\xi_1)]]d\mathbf{\xi}\\
&+\frac{1}{4}\int_0^t\int_0^{\xi_1}\int_0^{\xi_2}[[\AAA(\xi_3),\AAA(\xi_2)],\AAA(\xi_1)]d\mathbf{\xi} +\cdots.
\end{align*}
Once again, since $\AAA(t) \in \mathfrak{H}$, all commutators can be simplified to commutator-free terms in $\mathfrak{H}$. This is in contrast to \cite{hochbruck03omi}
where the Magnus expansion features nested commutators of matrices.

Bounds for convergence of the Magnus expansion analysed in \cite{iserles99ots} were improved upon in \cite{moan08}. Applied directly to the semiclassical
TDSE, these would impose a highly stringent restriction of $t=\OO{\ve^2}$ on the time step. \citeasnoun{hochbruck03omi} analysed the convergence of the Magnus
expansion in the context of the TDSE, making significant improvements on the earlier and more general bounds. Nevertheless, under semiclassical scaling their analysis suggests
a need for very small time steps of size $\OO{\ve}$ for convergence.

However, due to Corollary~\ref{cor:ve}, a grade $n$ term in the Magnus expansion of $\AAA(t)$ is $\OO{\ve^{n \sigma -1}}$. Thus, asymptotically speaking in terms of $\ve$, the terms in the expansion are decreasing in size with increasing $n$ for any $\sigma>0$, so
that convergence of the Magnus expansion also occurs for much larger time steps such as $t=\OO{\sqrt{\ve}}$. This is a considerable improvement over earlier analysis.

Different versions of the Magnus expansion based on these observations have been successfully combined with Zassenhaus splittings to yield
efficient methods for solving the TDSE with time-varying potentials \cite{IKS,MKO}. The skew-Hermitian nature of elements of $\mathfrak{H}$ once again leads to unitary evolution of our methods. As noted in Theorem~\ref{thm:quadratic}, the cost of these methods also grows quadratically in the order desired. This should be contrasted against Yoshida based methods---instead of requiring $3$ exponentials, the Strang splitting features $5$ or more terms for the first non-trivial Magnus expansion and consequently the cost of Yoshida methods (which are derived by composing Strang splittings) grows as $\OO{5^n}$ for an order-$2n$ accuracy. This is considerably more expensive than the $\OO{3^n}$ cost for time-independent potentials.

\subsection{A finite dimensional example}
While the examples of the previous sections are based in infinite dimensional algebras and function spaces, it is also possible to construct finite dimensional examples of such structures.
Consider the commutative subalgebra of $2n \times 2n$ matrices, $\AAA = M_{2n}(\mathbb{R})$,
\[ \CCC = \left\{ \left(
                    \begin{array}{cc}
                      a \mathrm{I}_n & A \\
                      \mathrm{O}_n & a \mathrm{I}_n \\
                    \end{array}
                  \right)
  \ :\ a \in \mathbb{R},\ A \in M_{n}(\mathbb{R}) \right\}, \]
where $\mathrm{I}_n$ is the $n \times n$ identity matrix and $\mathrm{O}_n$ is the $n \times n$ zero matrix. Consider any
\[ d = \left(
                    \begin{array}{cc}
                      E^{11} & E^{12} \\
                      \mathrm{O}_n & E^{22} \\
                    \end{array}
                  \right), \]
where $E^{ij} \in M_n(\mathbb{R})$. Then for every $x \in \CCC$,
\[ [d, x] = \left(
                    \begin{array}{cc}
                      \mathrm{O}_n & E^{11} A - A E^{22} \\
                      \mathrm{O}_n & \mathrm{O}_n \\
                    \end{array}
                  \right) \in \CCC. \]
Thus, $d$ is in the Lie idealiser of $\CCC$, $D = ad_d$ is a derivation on $\CCC$, and, consequently,
\[ \mathfrak{F}^d = \bigoplus_{k \in \mathbb{Z}^+_0} \{ \ang{x}{k}^d = \Frac12( x d^k + d^k x)\ :\ x \in \CCC\} \]
is a $\mathbb{Z}_2$-graded matrix Lie algebra.

Similar structures should be found in other commutative subalgebras of $M_n(\mathbb{R})$. Applications of these structural observations to linear algebra algorithms are yet to be explored.

\subsection{Future work}

\begin{enumerate}
\item Significantly relaxed bounds, improving on the bounds
implied by \cite{suzuki77ceo,hochbruck03omi,moan08,casas12} for the convergence of the sBCH, Magnus and Zassenhaus expansions in the case of semiclassical TDSEs
have been obtained due to the property of height reduction. Tighter bounds will require a careful analysis of the growth of the coefficients
$\pi_{n,i}^{k,l}$ and derivatives
of functions in \R{eq:angdx}.

\item The TDSEs analysed here are one-dimensional. Extending these ideas to $\partial_t u = \ii \ve \Delta u - \ii \ve^{-1} V(\MM{x}) u$ will require an analysis
of the free Lie algebra generated by polynomials in the commuting elements of the Lie idealiser, $\dx,\partial_y$ and $\partial_z$.

\item The Wigner equation $\partial_t w(x,\xi,t) = - \xi  \dx w(x,\xi,t) + \Theta^\ve[\delta V]\,w(x,\xi,t)$,
where $\Theta^\ve[\delta  V]$ is a pseudo-differential operator, features operators which are considerably more complicated \cite{jin11mac}.
Yet, the Lie algebraic structure is similar to $\mathfrak{F}$ with the role of the isomorphism played by $\Theta$.
In addition to $\dx$, multiplication by the frequency variable
$\xi$ is also an element of the Lie idealiser. While $\dx$ induces the derivation $\dx$ as usual, $\xi$ induces the derivation $\ii \partial_y$.

\item Matrix-valued potentials appear in TDSEs when we need to consider multiple energy levels at once. Magnus expansion based methods such as \cite{hochbruck03omi} are used once these potentials start featuring time-varying components \cite{karlsson}.

    Unfortunately, matrix-valued potentials do not directly fall into the framework of $\mathfrak{F}$ as a Lie algebra. However suitable extensions for these contexts are being actively sought. Our initial findings suggest that a modified version of the height reduction rule holds and it might be possible to extend the significantly milder time step restrictions and favourable computational complexity of \cite{bader14eaf,IKS,MKO} to these cases.
\end{enumerate}

\subsubsection*{Acknowledgements} The author is grateful to Marjorie Batchelor, Kurusch Ebrahimi--Fard, Dimitar Grantcharov, Arieh Iserles, Caroline Lasser, Hans Munthe--Kaas and Antonella Zanna for guidance on algebraic notions, help with combinatorial techniques and for many helpful discussions.

\bibliographystyle{agsm}
\bibliography{ref}


\appendix

\section{Tables of coefficients}
\label{app:tables}
The coefficients $\pi_{n,i}^{k,l}$ appearing in Lemma~\ref{lem:angass} for $k+l$ ranging between $1$ and $4$ are presented in Table~\ref{tab:pi}. The case $\ang{x}{0} \cdot \ang{y}{0} = \ang{xy}{0}$ is trivial and not listed.
    \begin{table}[h]
    \begin{center}
    \begin{tabular}{|c|c|c|c|c|c|c|}
    \hline
        $(k,l)$ &  $n$ & $\pi_{n,0}^{k,l}$ & $\pi_{n,1}^{k,l}$ & $\pi_{n,2}^{k,l}$ & $\pi_{n,3}^{k,l}$ & $\pi_{n,4}^{k,l}$ \\
        \hline \hline
        $(1,0)$ & $0$ & $1$ &  &  &  &   \\
                & $1$ & $1/2$ & $0$ &  &  &   \\
        \hline
        $(2,0)$ &$0$ & $1$ &  &  &  &  \\
                &$1$ & $1$ & $0$ &  &  &  \\
                &$2$ & $0$ & $-1/2$ & $0$  &  &   \\
        \hline
        $(1,1)$ &$0$ & $1$ &  &  &  &   \\
                &$1$ & $1/2$ & $-1/2$ &  &  &  \\
                &$2$ & $-1/4$ & $-3/4$ & $-1/4$  &  &   \\
        \hline
        $(3,0)$ &$0$ & $1$ &  &  &  &  \\
                &$1$ & $3/2$ & $0$ &  &  &  \\
                &$2$ & $0$ & $-3/2$ & $0$  &  &   \\
                &$3$ & $-1/4$ & $-3/4$ & $0$  & $0$ &  \\
        \hline
        $(2,1)$ &$0$ & $1$ &  &  &  &  \\
                &$1$ & $1$ & $-1/2$ &  &  & \\
                &$2$ & $-1/2$ & $2$ & $-1/2$  &  &  \\
                &$3$ & $-1/4$ & $-1/2$ & $0$  & $0$ & \\
        \hline
        $(4,0)$ &$0$ & $1$ &  &  &  & \\
                &$1$ & $2$ & $0$ &  &  &  \\
                &$2$ & $0$ & $-3$ & $0$  &  &  \\
                &$3$ & $-1$ & $-3$ & $0$  & $0$ & \\
                &$4$ & $0$ & $1/2$ & $3/2$  & $1/2$ & $0$ \\
        \hline
        $(3,1)$ &$0$ & $1$ &  &  &  & \\
                &$1$ & $3/2$ & $-1/2$ &  &  &  \\
                &$2$ & $-3/4$ & $-15/4$ & $-3/4$  &  &  \\
                &$3$ & $-1$ & $-9/4$ & $0$  & $0$ & \\
                &$4$ & $1/8$ & $1$ & $15/8$  & $7/8$ & $1/8$ \\
        \hline
        $(2,2)$ &$0$ & $1$ &  &  &  & \\
                &$1$ & $1$ & $-1$ &  &  &  \\
                &$2$ & $-1$ & $-4$ & $-1$  &  &  \\
                &$3$ & $-1/2$ & $-1$ & $1$  & $1/2$ & \\
                &$4$ & $1/4$ & $5/4$ & $9/4$  & $5/4$ & $1/4$ \\
    \hline
    \end{tabular}
    \caption{A table of the coefficients $\pi_{n,i}^{k,l},\ n \in \{0,\ldots, k+l\},\ i \in \{0,\ldots,n\}$, which appear in Lemma~\ref{lem:angass}. }
    \label{tab:pi}
    \end{center}
    \end{table}
From Lemma~\ref{lem:symmetry_pi} we know that $\pi_{n,i}^{k,l} = (-1)^n \pi_{n,n-i}^{l,k}$, so that specifying the rows $(1,3)$ and $(0,4)$, for instance, would be redundant.


In Table~\ref{tab:lambda} we present the coefficients $\lambda_{n,i}^{k,l}$ appearing in Theorem~\ref{thm:anglie} for $k+l$ ranging between $1$ and $6$, while noting that the relation $\lambda_{n,i}^{k,l} = - \lambda_{n,2n+1-i}^{l,k}$ makes redundant the need to specify coefficients when $k$ and $l$ exchange values.
    \begin{table}[h]
    \begin{center}
    \begin{tabular}{|c|c|c|c|c|c|c|c|}
    \hline
        $(k,l)$ &  $n$ & $\lambda_{n,0}^{k,l}$ & $\lambda_{n,1}^{k,l}$ & $\lambda_{n,2}^{k,l}$ & $\lambda_{n,3}^{k,l}$ & $\lambda_{n,4}^{k,l}$ & $\lambda_{n,5}^{k,l}$\\
        \hline \hline
        $(1,0)$ & $0$ & $1$ & $0$ &  &  &  & \\
        \hline
        $(2,0)$ &$0$ & $2$ & $0$ &  &  &  & \\
        \hline
        $(1,1)$ & $0$ & $1$ & $-1$ &  &  &  & \\
        \hline
        $(3,0)$ &  $0$ & $3$ & $0$ &  &  &  & \\
                &  $1$ & $-1/2$ & $-3/2$ & $0$  & $0$  &  & \\
        \hline
        $(2,1)$ &  $0$ & $2$ & $-1$ &  &  &  & \\
                &  $1$ & $-1/2$ & $-1$ & $0$  & $0$  &  & \\
        \hline
        $(4,0)$ &  $0$ & $4$ & $0$ &  &  &  & \\
                &  $1$ & $-2$ & $6$ & $0$  & $0$  &  & \\
        \hline
        $(3,1)$ &  $0$ & $3$ & $-1$ &  &  &  & \\
                &  $1$ & $-2$ & $-9/2$ & $0$  & $0$  &  & \\
        \hline
        $(2,2)$ &  $0$ & $2$ & $-2$ &  &  &  & \\
                &  $1$ & $-1$ & $-2$ & $2$  & $1$  &  & \\
        \hline
        $(5,0)$ &  $0$ & $5$ & $0$ &  &  &  & \\
                &  $1$ & $-5$ & $-15$ & $0$  & $0$  &  & \\
                &  $2$ & $1$ & $5$ & $15/2$  & $5/2$  & $0$  & $0$  \\
        \hline
        $(4,1)$ &  $0$ & $4$ & $-1$ &  &  &  & \\
                &  $1$ & $-5$ & $-12$ & $0$  & $0$  &  & \\
                &  $2$ & $1$ & $9/2$ & $6$  & $2$  & $0$  & $0$  \\
        \hline
        $(3,2)$ &  $0$ & $3$ & $-2$ &  &  &  & \\
                &  $1$ & $-7/2$ & $-15/2$ & $3$  & $3/2$  &  & \\
                &  $2$ & $3/4$ & $3$ & $7/2$  & $0$  & $-1$  & $-1/4$  \\
        \hline
        $(6,0)$ &  $0$ & $6$ & $0$ &  &  &  & \\
                &  $1$ & $-10$ & $-30$ & $0$  & $0$  &  & \\
                &  $2$ & $6$ & $30$ & $45$  & $15$  & $0$  & $0$  \\
        \hline
        $(5,1)$ &  $0$ & $5$ & $-1$ &  &  &  & \\
                &  $1$ & $-10$ & $-25$ & $0$  & $0$  &  & \\
                &  $2$ & $6$ & $55/2$ & $75/2$  & $25/2$  & $0$  & $0$  \\
        \hline
        $(4,2)$ &  $0$ & $4$ & $-2$ &  &  &  & \\
                &  $1$ & $-8$ & $-18$ & $4$  & $2$  &  & \\
                &  $2$ & $5$ & $21$ & $26$  & $4$  & $-4$  & $-1$  \\
        \hline
        $(3,3)$ &  $0$ & $3$ & $-3$ &  &  &  & \\
                &  $1$ & $-5$ & $-21/2$ & $21/2$  & $5$  &  & \\
                &  $2$ & $3$ & $12$ & $21/2$  & $-21/2$  & $-12$  & $-3$  \\
    \hline
    \end{tabular}
    \caption{A table of the coefficients $\lambda_{n,i}^{k,l},\ n \in \{0,\ldots, (k+l-1)/2\},\ i \in \{0,\ldots,2n+1\}$, which appear in Theorem~\ref{thm:anglie}. }
    \label{tab:lambda}
    \end{center}
    \end{table}
The values of the coefficients for $(k,l)=(0,2)$ can be inferred from the row $(k,l)=(2,0)$, for instance. Since $\lambda_{n,i}^{k,l} = 2 \pi_{2n+1,i}^{k,l}$, the first eight rows can be read directly by doubling the corresponding rows in Table~\ref{tab:pi}. Note that $[\ang{x}{0}, \ang{y}{0}]=0$, and the case $k+l=0$ doesn't merit a mention in the table. Using the row $(k,l)=(3,2)$ we can compute
    \begin{Eqnarray*}
        \left[ \ang{x}{3}, \ang{y}{2} \right] &=& \ang{3\, x\, D y - 2\, D x\, y}{0} \\
        && + \ang{-(7/2)\, x\, D^3 y -(15/2)\,D x\, D^2 y + 3\, D^2 x\, D y + (3/2)\, D^3 x\ y}{1} \\
        && + \ang{(3/4)\, x\, D^5 y + 3\, D x\, D^4 y + (7/2)\, D^2 x\, D^3 y -  D^4 x\, D y -(1/4)\,  D^5 x\ y}{2}.
    \end{Eqnarray*}
Similarly, in the context of the function space $\SSS$ and differential operator $d = \dx$,
    \[ \left[ \Ang{2}{f}, \Ang{1}{g} \right] = \Ang{0}{- \Frac12 f (\dx^3 g) - (\dx f) (\dx^2 g)} + \Ang{2}{2 f (\dx g) - (\dx f) g}  \]
is found by substituting in \R{eq:angdx} the coefficients from row $(k,l)=(2,1)$.
Note that these brackets are linear,
$\ang{\alpha x + \beta y}{k} = \alpha \ang{x}{k} + \beta \ang{y}{k}$.

\section{Proof of Lemma~\ref{lem:explicit_pi}---explicit form of $\pi_{n,i}^{k,l}$}
\label{app:explicit_pi}
    Let
    \begin{equation}
        \label{eq:S}
        S_{(a,p),(n,i)}^q =  \begin{cases} \delta_{a,n} \delta_{p,i} + \binom{q-n}{a-n} \binom{a-n}{p-i} & a \geq n, p \geq i,\\
                            0 & \mathrm{otherwise}. \end{cases}
    \end{equation}
    The explicit form of  $\pi$s in \R{eq:explicit_pi} allows us to express $R_{a,p}^{k,l}$ in \R{eq:RHSlambda} as
    \begin{Eqnarray*}
    R_{a,p}^{k,l} &=& 2 \sum_{n=0}^{k+l} \sum_{i=0}^{k+l} S_{(a,p),(n,i)}^{k+l} \pi_{n,i}^{k,l}\\
    &=& \sum_{n=0}^{k+l} \sum_{i=0}^{k+l} S_{(a,p),(n,i)}^{k+l} \sum_{s=0}^{k+l} \sum_{j=0}^{k+l} A_{(n,i),(s,j)}^{k+l} L_{s,j}^{k,l}\\
    &=& \sum_{s=0}^{k+l} \sum_{j=0}^{k+l} \left[\sum_{n=0}^{k+l} \sum_{i=0}^{k+l} S_{(a,p),(n,i)}^{k+l} A_{(n,i),(s,j)}^{k+l}\right] L_{s,j}^{k,l}.
    \end{Eqnarray*}
    To prove Lemma~\ref{lem:explicit_pi}, we need to prove that $A$ in \R{eq:explicit_pi} is such that $R_{a,p}^{k,l} =  L_{a,p}^{k,l}$ is satisfied. This certainly holds if
    \begin{equation}
        \label{eq:TPT}
        \sum_{n=0}^{q} \sum_{i=0}^{q} S_{(a,p),(n,i)}^{q} A_{(n,i),(s,j)}^{q} = \delta_{a,s} \delta_{p,j},
    \end{equation}
    holds for any $q$ and $a,s,p,j \in \{0,\ldots,q\}$. To prove this we note that $S$ and $A$, and therefore their product $SA$, are lower triangular. Thus we may concern ourselves solely with the case $0 \leq s \leq a \leq q$ and $0 \leq j \leq p \leq q$. Denoting $(SA)^q_{(a,p),(s,j)}$ as $T$ for brevity,
    \begin{equation*}
    T = \sum_{n=0}^{q} \sum_{i=0}^{q} \left[ \delta_{a,n} \delta_{p,i} + \binom{q-n}{a-n} \binom{a-n}{p-i} \right] \left[ \delta_{n,s}\delta_{i,j} - \frac{P_{n-s+1}}{n-s+1} \binom{q-s}{n-s} \binom{n-s}{i-j} \right]
    \end{equation*}
    can be separated into four parts,
    \begin{Eqnarray*}
    T_1 & = & \sum_{n=0}^{q} \sum_{i=0}^{q} \delta_{a,n} \delta_{p,i} \delta_{n,s}\delta_{i,j} = \delta_{a,s} \delta_{p,j},\\
    T_2 & = & - \sum_{n=0}^{q} \sum_{i=0}^{q} \delta_{a,n} \delta_{p,i} \frac{P_{n-s+1}}{n-s+1} \binom{q-s}{n-s} \binom{n-s}{i-j}\\
        &=& -\frac{P_{a-s+1}}{a-s+1} \binom{q-s}{a-s} \binom{a-s}{p-j},\\
    T_3 & = & \sum_{n=0}^{q} \sum_{i=0}^{q} \delta_{n,s}\delta_{i,j} \binom{q-n}{a-n} \binom{a-n}{p-i} = \binom{q-s}{a-s} \binom{a-s}{p-j},\\
    T_4 & =& - \sum_{n=0}^{q} \sum_{i=0}^{q} \binom{q-n}{a-n} \binom{a-n}{p-i} \frac{P_{n-s+1}}{n-s+1} \binom{q-s}{n-s} \binom{n-s}{i-j}.
    \end{Eqnarray*}
    In the case of $T_4$ we note that the binomial coefficients vanish except for $s \leq n \leq a$ and $j \leq i \leq p$, when expanding them
    leads to the expression
    \[ T_4 = - \frac{(q-s)!}{(q-a)!} \sum_{n=s}^{a} \frac{P_{n-s+1}}{n-s+1} \sum_{i=j}^{p} \frac{1}{(p-i)!(a-n-p+i)!(i-j)!(n-s-i+j)!}. \]
    In order to reduce this expression, will need the following identities,
    \begin{equation}
        \label{eq:1}
        \sum_{i=j}^{p} \frac{1}{(p-i)!(a-n-p+i)!(i-j)!(n-s-i+j)!} = \frac{1}{(a-n)!(n-s)!} \binom{a-s}{p-j},
    \end{equation}
    \begin{equation}
        \label{eq:3}
        \sum_{n=0}^{b} \binom{b+1}{n+1}  (2^{n+1}-1) B_{n+1} = -\frac12 \delta_{b,0} - \delta_{b > 0} P_{b+1},
    \end{equation}
    and
    \begin{equation}
        \label{eq:2}
        \sum_{n=0}^{b} \frac{P_{n+1}}{(n+1)!(b-n)!} =  \frac{1}{b!} - \frac12 \delta_{b,0} - \frac{P_{b+1}}{(b+1)!} \delta_{b > 0},
    \end{equation}
    where $\delta_{b>0}$ is $1$ if $b>0$ and $0$ otherwise. Using these identities, which we prove at the end of this appendix,
    \begin{Eqnarray*}
    T_4 &\overset{\R{eq:1}}{=}&  - \frac{(q-s)!}{(q-a)!} \binom{a-s}{p-j} \sum_{n=s}^{a} \frac{P_{n-s+1}}{(n-s+1)!(a-n)!}\\
    &=&  - \frac{(q-s)!}{(q-a)!} \binom{a-s}{p-j} \sum_{n=0}^{a-s} \frac{P_{n+1}}{(n+1)!((a-s)-n)!}\\
    &\overset{\R{eq:2}}{=}& - \frac{(q-s)!}{(q-a)!} \binom{a-s}{p-j} \left[\frac{1}{(a-s)!} - \frac12 \delta_{a,s} - \frac{P_{a-s+1}}{(a-s+1)!} \delta_{a > s}\right]\\
    &=& -\binom{q-s}{a-s}\binom{a-s}{p-j} + \frac12 \delta_{a,s} \delta_{p,j} +  \frac{P_{a-s+1}}{a-s+1} \binom{q-s}{a-s} \binom{a-s}{p-j} \delta_{a > s}
    \end{Eqnarray*}
    This allows us to complete the proof of Lemma~\ref{lem:explicit_pi} by proving \R{eq:TPT},
    \begin{Eqnarray*}
    T &=& \delta_{a,s} \delta_{p,j} -\frac{P_{a-s+1}}{a-s+1} \binom{q-s}{a-s} \binom{a-s}{p-j} +  \binom{q-s}{a-s} \binom{a-s}{p-j} \\ &&-\binom{q-s}{a-s}\binom{a-s}{p-j} + \frac12 \delta_{a,s} \delta_{p,j} +  \frac{P_{a-s+1}}{a-s+1} \binom{q-s}{a-s} \binom{a-s}{p-j} \delta_{a > s}\\
    &=& \frac32 \delta_{a,s} \delta_{p,j} - (1-\delta_{a>s}) \frac{P_{a-s+1}}{a-s+1} \binom{q-s}{a-s} \binom{a-s}{p-j}\\
    &=& \frac32 \delta_{a,s} \delta_{p,j} - \delta_{a\leq s} \frac{P_{a-s+1}}{a-s+1} \binom{q-s}{a-s} \binom{a-s}{p-j}\\
    &=& \frac32 \delta_{a,s} \delta_{p,j} - \delta_{a,s} \delta_{p,j} P_1 = \delta_{a,s} \delta_{p,j},\\
    \end{Eqnarray*}
    in proving which we have used $P_1=1/2$ and the fact that $SA$ is lower triangular (thus the only case of $a \leq s$ that
    we need to consider is $s=a$). This completes the proof of Lemma~\ref{lem:explicit_pi}.

    Proofs for the identities \R{eq:1}, \R{eq:3} and \R{eq:2} are given below.\\
    \begin{proof}\R{eq:1}
        \begin{Eqnarray*}
        &&\sum_{i=j}^{p} \frac{(a-n)!(n-s)!}{(p-i)!(a-n-p+i)!(i-j)!(n-s-i+j)!} =\sum_{i=j}^p \binom{a-n}{p-i} \binom{n-s}{i-j} \\
        && \qquad =  \sum_{i=0}^{p-j} \binom{a-n}{(p-j)-i} \binom{n-s}{i} =  \binom{a-s}{p-j},
        \end{Eqnarray*}
        since
        \[ \sum_{i=0}^k \binom{n}{k-i} \binom{m}{i} = \binom{n+m}{k}. \]
    \end{proof}\\
    \begin{proof}\R{eq:3}
        \begin{Eqnarray}
        \nonumber \sum_{n=0}^{b} \binom{b+1}{n+1}  (2^{n+1}-1) B_{n+1} &=& \sum_{n=0}^{b} \binom{b+1}{n}  (2^{b-n+1}-1) B_{b-n+1}\\
        \nonumber &=& \sum_{n=0}^{b+1} \binom{b+1}{n}  (2^{b-n+1}-1) B_{b-n+1} - 0 \\
        \nonumber &=& \left[ 2^{b+1} \BBB_{b+1}(1/2) - \BBB_{b+1}(1) \right] \\
        \label{eq:3exp}&=& \left[ 2- 2^{b+1}  - (-1)^{b+1} \right] B_{b+1},
        \end{Eqnarray}
        where $\BBB_k(x)$ are the Bernoulli polynomials,
        \[ \BBB_k(x) = \sum_{i=0}^{k} \binom{k}{i} B_{k-i}\, x^i, \]
        whose values at $1$ and $1/2$ are, \[ \BBB_{k}(1) = (-1)^k B_k,\qquad \BBB_{k}(1/2) = (2^{1-k} - 1)B_k.\]
        For $b=0$, the expression \R{eq:3exp} evaluates to $-1/2$, while for $b>0$, $(-1)^{b+1}$ can be replaced by $1$
        since $B_{b+1}$ vanishes for all cases when $(-1)^{b+1}$ is negative. Thus, for $b>0$, \R{eq:3exp} evaluates to $\left( 1- 2^{b+1}  \right) B_{b+1}$. Using the same logic, we may multiply it by $(-1)^{b+1}$ to get $-P_{b+1}$, completing the proof.
    \end{proof}\\
    \begin{proof}\R{eq:2}
        \begin{Eqnarray*}
            \sum_{n=0}^{b} \frac{P_{n+1}}{(n+1)!(b-n)!} &=& \frac{1}{(b+1)!} \sum_{n=0}^{b} \binom{b+1}{n+1} (-1)^{n+1} (2^{n+1}-1) B_{n+1}\\
            &=& \frac{1}{(b+1)!} \left[ \sum_{n=0}^{b} \binom{b+1}{n+1}  (2^{n+1}-1) B_{n+1} - 2 (b+1) B_1\right]\\
            &=& \frac{1}{b!} + \frac{1}{(b+1)!}  \sum_{n=0}^{b} \binom{b+1}{n+1}  (2^{n+1}-1) B_{n+1}\\
            &\overset{\R{eq:3}}{=}& \frac{1}{b!} - \frac12 \delta_{b,0} - \frac{P_{b+1}}{(b+1)!} \delta_{b > 0}.
        \end{Eqnarray*}
    Where we have used the fact that, except for the $n=0$ case, all negative occurrences of $(-1)^{n+1}$ vanish since $B_{n+1}$ vanishes.
    \end{proof}

\section{Proof of Lemma~\ref{lem:genfun_pi}---generating function}
\label{app:genfun_pi}
We wish to find an explicit form for the generating function
    \begin{equation}
        \tag{\ref{eq:genfun_pi}}
        h(u,w,y,x) = \sum_{l=0}^{\infty} \frac{u^l}{l!} \sum_{k=0}^{\infty} \frac{w^k}{k!} \sum_{n=0}^{k+l} (k+l-n)! y^n \sum_{i=0}^{n} x^i \pi^{k,l}_{n,i}.
    \end{equation}
We start from the result of Lemma~\ref{lem:explicit_pi}, substituting (\ref{eq:explicit_pi}) in \R{eq:genfun_pi} and splitting $\pi_{n,i}^{k,l}$ into eight parts for convenience,
    \begin{equation}
        \label{eq:expansion_pi}
        \pi_{n,i}^{k,l} = \frac12 \sum_{j=1}^8 p_j(k,l,n,i),
    \end{equation}
where
    \begin{Eqnarray}
    \nonumber p_1(k,l,n,i) &=& \delta_{i,0} \binom{k}{n}, \\
    \nonumber p_2(k,l,n,i) &=& \delta_{i,0} \binom{k+l}{n},\\
    \nonumber p_3(k,l,n,i) &=& \binom{k}{i}\binom{k-i}{n-i},\\
    \nonumber p_4(k,l,n,i) &=& \binom{k}{i}\binom{k+l-i}{n-i},\\
    \nonumber p_5(k,l,n,i) &=& - \sum_{r=0}^n \frac{P_{r+1}}{r+1} \binom{k+l-n+r}{r} \binom{r}{i} \binom{k}{n-r},\\
    \nonumber p_6(k,l,n,i) &=& - \sum_{r=0}^n \frac{P_{r+1}}{r+1} \binom{k+l-n+r}{r} \binom{r}{i} \binom{k+l}{n-r},\\
    \nonumber p_7(k,l,n,i) &=& - \sum_{r=0}^n \sum_{j=0}^{n-r} \frac{P_{r+1}}{r+1} \binom{k+l-n+r}{r} \binom{r}{i-j} \binom{k}{j} \binom{k-j}{n-r-j},\\
    \nonumber p_8(k,l,n,i) &=& - \sum_{r=0}^n \sum_{j=0}^{n-r} \frac{P_{r+1}}{r+1} \binom{k+l-n+r}{r} \binom{r}{i-j} \binom{k}{j} \binom{k+l-j}{n-r-j},
    \end{Eqnarray}
are obtained after a change of variables, $r=n-s$, and noting the facts that $\binom{0}{i-j} = \delta_{i,j}$ and that $\binom{n}{k}$ vanishes for $k<0$ when $n \geq 0$.
We will simplify the corresponding parts of $h(u,w,y,x)$,
    \[ h_j(u,w,y,x) = \sum_{l=0}^{\infty} \frac{u^l}{l!} \sum_{k=0}^{\infty} \frac{w^k}{k!} \sum_{n=0}^{k+l} (k+l-n)! y^n \sum_{i=0}^{n} x^i p_j(k,l,n,i), \]
combining them to find an expression for the generating function $h =  \sum_{j=1}^8 h_j/2$. In this pursuit, we will repeatedly use a few results,
    \begin{equation}
        \label{eq:genfun_Br}
        \sum_{r=0}^{\infty} \frac{B_{r}}{r!} x^r = \frac{x}{\ee^x -1},
    \end{equation}
which is well known,
    \begin{Eqnarray}
        \nonumber \sum_{r=0}^{\infty} \frac{P_{r+1}}{(r+1)!} x^r &=& \frac{1}{x} \sum_{r=0}^{\infty} \frac{P_r}{r!} x^r =  \frac{1}{x} \sum_{r=0}^{\infty} \frac{(-1)^r (2^r - 1) B_r}{r!} x^r\\
        \nonumber & = & \frac{1}{x} \sum_{r=0}^{\infty} \frac{B_r }{r!}(-2x)^r  - \frac{1}{x} \sum_{r=0}^{\infty} \frac{B_r }{r!}(-x)^r\\
        \label{eq:genfun_Pr} & = & -\left( \frac{2}{\ee^{-2x} +1} - \frac{1}{\ee^{-x} +1}\right) = \frac{\ee^x}{\ee^x +1},
    \end{Eqnarray}
where we use the fact that $P_0=0$, and
    \begin{Eqnarray}
        \nonumber \sum_{l=0}^{\infty} \sum_{k=0}^{\infty} \binom{k+r}{l} u^l w^{k+n-l} & = &  \sum_{k=0}^{\infty} w^{k+n} \sum_{l=0}^{\infty}\binom{k+r}{l} \left(\frac{u}{w}\right)^l   =   \sum_{k=0}^{\infty} w^{k+n}  \left(1+\frac{u}{w}\right)^k+r  \\
        \label{eq:genfun_wu} & = &   w^{n-r} (w+u)^r \sum_{k=0}^{\infty}  (w+u)^k  =\frac{w^{n-r} (w+u)^r}{1-(w+u)}.
    \end{Eqnarray}
Two standard tricks for exchanging summations that we exploit are
    \begin{Eqnarray}
        \label{eq:sum_nr} \sum_{n=0}^{\infty} \sum_{r=0}^{n} \alpha_{n,r} &=& \sum_{r=0}^{\infty} \sum_{n=0}^{\infty}  \alpha_{n+r,r}, \\
        \label{eq:sum_nk} \sum_{k=0}^{\infty} \sum_{n=0}^{k+l} \alpha_{n,k,l} &=& \sum_{n=0}^{\infty} \sum_{k=0}^{\infty} \alpha_{n,k+n-l,l}.
    \end{Eqnarray}
With these tools, we proceed to seek expressions for the generating functions $h_j$, writing $h_j$ as shorthand for $h_j(u,w,y,x)$.
    \begin{Eqnarray*}
        h_1 & = & \sum_{l=0}^{\infty} \frac{u^l}{l!} \sum_{k=0}^{\infty} \frac{w^k}{k!} \sum_{n=0}^{k+l} (k+l-n)! y^n \sum_{i=0}^{n} x^i \delta_{i,0} \binom{k}{n} \\
        & = & \sum_{l=0}^{\infty} u^l \sum_{k=0}^{\infty} w^k \sum_{n=0}^{k+l} \binom{k+l-n}{l}  \frac{y^n}{n!} =  \sum_{n=0}^{\infty} \frac{y^n}{n!} \left[\sum_{l=0}^{\infty} \sum_{k=0}^{\infty} \binom{k}{l} u^l  w^{k+n-l}   \right] \\
        & = & \frac{1}{1-(w+u)} \sum_{n=0}^{\infty} \frac{(yw)^n}{n!} = \frac{\ee^{yw}}{1-(w+u)},
    \end{Eqnarray*}
    \begin{Eqnarray*}
        h_2 & = & \sum_{l=0}^{\infty} \frac{u^l}{l!} \sum_{k=0}^{\infty} \frac{w^k}{k!} \sum_{n=0}^{k+l} (k+l-n)! y^n \sum_{i=0}^{n} x^i \delta_{i,0} \binom{k+l}{n} \\
        & = & \sum_{l=0}^{\infty} u^l \sum_{k=0}^{\infty} w^k \sum_{n=0}^{k+l} \binom{k+l}{l}  \frac{y^n}{n!} = \sum_{n=0}^{\infty} \frac{y^n}{n!} \left[\sum_{l=0}^{\infty} \sum_{k=0}^{\infty} \binom{k+n}{l} u^l  w^{k+n-l}   \right] \\
        & = & \frac{1}{1-(w+u)} \sum_{n=0}^{\infty} \frac{(y(w+u))^n}{n!} = \frac{\ee^{y(w+u)}}{1-(w+u)},
    \end{Eqnarray*}
    \begin{Eqnarray*}
        h_3 & = & \sum_{l=0}^{\infty} \frac{u^l}{l!} \sum_{k=0}^{\infty} \frac{w^k}{k!} \sum_{n=0}^{k+l} (k+l-n)! y^n \sum_{i=0}^{n} x^i \binom{k}{i}\binom{k-i}{n-i} \\
        & = & \sum_{l=0}^{\infty} u^l \sum_{k=0}^{\infty} w^k \sum_{n=0}^{k+l} \frac{y^n}{n!}  \binom{k+l-n}{l} \left[\sum_{i=0}^n \binom{n}{i} x^i \right]\\
        & = & \sum_{n=0}^{\infty} \frac{(y(1+x))^n}{n!}  \left[ \sum_{l=0}^{\infty}  \sum_{k=0}^{\infty} \binom{k}{l} u^l  w^{k+n-l} \right] \\
        & = & \frac{1}{1-(w+u)} \sum_{n=0}^{\infty} \frac{(yw(1+x))^n}{n!} = \frac{\ee^{yw(1+x)}}{1-(w+u)},
    \end{Eqnarray*}
    \begin{Eqnarray*}
        h_4 & = & \sum_{l=0}^{\infty} \frac{u^l}{l!} \sum_{k=0}^{\infty} \frac{w^k}{k!} \sum_{n=0}^{k+l} (k+l-n)! y^n \sum_{i=0}^{n} x^i  \binom{k}{i}\binom{k+l-i}{n-i} \\
        & = & \sum_{l=0}^{\infty} u^l \sum_{k=0}^{\infty} w^k \sum_{n=0}^{k+l} \frac{y^n}{n!} \sum_{i=0}^n \binom{n}{i} x^i   \binom{k+l-i}{l}  \\
        & = & \sum_{n=0}^{\infty} \frac{y^n}{n!} \sum_{i=0}^n \binom{n}{i} x^i  \left[ \sum_{l=0}^{\infty}  \sum_{k=0}^{\infty} \binom{k+n-i}{l} u^l  w^{k+n-l} \right] \\
        & = &  \frac{1}{1-(w+u)} \sum_{n=0}^{\infty} \frac{y^n}{n!} \sum_{i=0}^n \binom{n}{i} x^i  w^i (w+u)^{n-i} \\
        & = &  \frac{1}{1-(w+u)} \sum_{n=0}^{\infty} \frac{(y((w+u)+xw))^n}{n!} =  \frac{\ee^{y(w+u)+xyw}}{1-(w+u)},
    \end{Eqnarray*}
    \begin{Eqnarray*}
    h_5 & = & - \sum_{l=0}^{\infty} \frac{u^l}{l!} \sum_{k=0}^{\infty} \frac{w^k}{k!} \sum_{n=0}^{k+l} (k+l-n)! y^n \sum_{i=0}^{n} x^i \sum_{r=0}^n \frac{P_{r+1}}{r+1} \binom{k+l-n+r}{r} \binom{r}{i} \binom{k}{n-r} \\
    & = & - \sum_{l=0}^{\infty} u^l \sum_{k=0}^{\infty} w^k \sum_{n=0}^{k+l} y^n \sum_{r=0}^n \sum_{i=0}^{r} x^i  \frac{P_{r+1}}{(r+1)!(n-r)!} \binom{k+l-n+r}{l} \binom{r}{i}\\
    & = & -  \sum_{n=0}^{\infty} y^n \sum_{r=0}^n  \frac{P_{r+1}}{(r+1)!(n-r)!} \left[\sum_{l=0}^{\infty} \sum_{k=0}^{\infty} \binom{k+r}{l} u^l w^{k+n-l} \right] \left[\sum_{i=0}^{r}  \binom{r}{i} x^i \right],
    \end{Eqnarray*}
where we have changed summation limits on $n$ using (\ref{eq:sum_nk}), and for $i$ since $r\leq n$ and $\binom{r}{i}=0$ for $i>r\geq 0$. Using (\ref{eq:genfun_wu}) and changing limits again using (\ref{eq:sum_nr}),
    \begin{Eqnarray*}
    h_5 & = & - \frac{1}{1-(w+u)} \sum_{n=0}^{\infty} y^n \sum_{r=0}^n \frac{P_{r+1}}{(r+1)!(n-r)!} w^{n-r} ((w+u)(1+x))^r \\
    & = & - \frac{1}{1-(w+u)} \left[\sum_{n=0}^{\infty} \frac{(yw)^n}{n!}\right]  \sum_{r=0}^{\infty} \frac{P_{r+1}}{(r+1)!} (y(w+u)(1+x))^r   \\
    & = & - \frac{\ee^{yw}}{1-(w+u)} \frac{\ee^{z}}{\ee^{z}+1},
    \end{Eqnarray*}
where we have defined $z=y(w+u)(1+x)$ for convenience.
    \begin{Eqnarray*}
        h_6 & = & - \sum_{l=0}^{\infty} \frac{u^l}{l!} \sum_{k=0}^{\infty} \frac{w^k}{k!} \sum_{n=0}^{k+l} (k+l-n)! y^n \sum_{i=0}^{n} x^i \sum_{r=0}^n \frac{P_{r+1}}{r+1} \binom{k+l-n+r}{r} \binom{r}{i} \binom{k+l}{n-r} \\
        & = & - \sum_{l=0}^{\infty} u^l \sum_{k=0}^{\infty} w^k \sum_{n=0}^{k+l} y^n \sum_{r=0}^n  \frac{P_{r+1}}{(r+1)!(n-r)!}  \binom{k+l}{l} \left[\sum_{i=0}^{r} \binom{r}{i} x^i\right] \\
        & = & - \sum_{n=0}^{\infty} y^n  \sum_{r=0}^n  \frac{P_{r+1}}{(r+1)!(n-r)!} (1+x)^r \left[\sum_{l=0}^{\infty} \sum_{k=0}^{\infty} \binom{k+n}{l} u^l w^{k+n-l}\right]\\
        & = & - \frac{1}{1-(w+u)}\sum_{n=0}^{\infty} (y(w+u))^n  \sum_{r=0}^n  \frac{P_{r+1}}{(r+1)!(n-r)!} (1+x)^r  \\
        & = & - \frac{1}{1-(w+u)} \left[\sum_{n=0}^{\infty} \frac{(y(w+u))^n}{n!} \right] \sum_{r=0}^\infty  \frac{P_{r+1}}{(r+1)!} (y(w+u)(1+x))^r\\
        & = & - \frac{\ee^{y(w+u)}}{1-(w+u)} \frac{\ee^{z}}{\ee^{z}+1}.
    \end{Eqnarray*}
After manipulating binomial coefficients, we change summation limits (\ref{eq:sum_nk}) and use (\ref{eq:genfun_wu}),
    \begin{Eqnarray*}
        h_7 & = & - \sum_{l=0}^{\infty} u^l \sum_{k=0}^{\infty} w^k \sum_{n=0}^{k+l} y^n  \sum_{r=0}^n \sum_{j=0}^{n-r} \frac{P_{r+1}}{(r+1)! j! (n-r-j)!} \binom{k+l-n+r}{l} \left[\sum_{i=0}^{n} \binom{r}{i-j} x^i \right]  \\
        & = & -  \sum_{n=0}^{\infty} y^n  \sum_{r=0}^n \sum_{j=0}^{n-r} \frac{P_{r+1}}{(r+1)! j! (n-r-j)!}  \left[\sum_{l=0}^{\infty}  \sum_{k=0}^{\infty} \binom{k+r}{l} u^l w^{k+n-l}\right] \left[\sum_{i=j}^{r+j} \binom{r}{i-j} x^i \right] \\
        & = & - \frac{1}{1-(w+u)} \sum_{n=0}^{\infty} y^n  \sum_{r=0}^n \sum_{j=0}^{n-r} \frac{P_{r+1}}{(r+1)! j! (n-r-j)!}  w^{n-r} (w+u)^r x^j (1+x)^r,
    \end{Eqnarray*}
since $\binom{r}{i-j}$ vanishes unless $j \leq i \leq r+j$ and $r+j \leq n$. Exchanging limits twice using (\ref{eq:sum_nr}), once for $r$ and once for $j$,
    \begin{Eqnarray*}
        h_7 & = & - \frac{1}{1-(w+u)} \sum_{r=0}^\infty \sum_{n=0}^{\infty} (yw)^n   \sum_{j=0}^{n} \frac{P_{r+1}}{(r+1)! j! (n-j)!} z^r x^j\\
        & = & - \frac{1}{1-(w+u)} \left[\sum_{n=0}^{\infty} \frac{(yw)^n}{n!}\right] \left[\sum_{r=0}^\infty \frac{P_{r+1}}{(r+1)!} z^r \right] \left[\sum_{j=0}^\infty     \frac{(ywx)^j}{j!} \right] \\
        & = & - \frac{\ee^{yw(1+x)}}{1-(w+u)} \frac{\ee^{z}}{\ee^{z}+1}.
    \end{Eqnarray*}
Along similar lines we manipulate
    \begin{Eqnarray*}
        h_8
        & = & \sum_{l=0}^{\infty} u^l\sum_{k=0}^{\infty} w^k \sum_{n=0}^{k+l}  y^n   \sum_{r=0}^n \sum_{j=0}^{n-r} \frac{- P_{r+1}}{(r+1)!j!(n-r-j)!} \binom{k+l-j}{l} \left[\sum_{i=0}^{n} \binom{r}{i-j} x^i \right]\\
        & = & \sum_{n=0}^{\infty}   \sum_{r=0}^n \sum_{j=0}^{n-r} \frac{-  P_{r+1} y^n}{(r+1)!j!(n-r-j)!} \!\! \left[\sum_{i=j}^{r+j} \binom{r}{i-j} x^i \right] \!\! \left[\sum_{l=0}^{\infty}  \sum_{k=0}^{\infty} \binom{k+n-j}{l} u^l w^{k+n-l}\right]\\
        & = & - \frac{1}{1-(w+u)} \sum_{n=0}^{\infty} y^n  \sum_{r=0}^n \sum_{j=0}^{n-r} \frac{P_{r+1}}{(r+1)!j!(n-r-j)!}  x^j (1+x)^r w^j (w+u)^{n-j},
    \end{Eqnarray*}
since $\binom{r}{i-j}$ vanishes unless $j \leq i \leq r+j$ and $r+j \leq n$. Once again, exchanging limits twice using (\ref{eq:sum_nr}), once for $r$ and once for $j$,
    \begin{Eqnarray*}
        h_8
        & = & - \frac{1}{1-(w+u)} \sum_{r=0}^\infty \sum_{n=0}^{\infty} y^{n+r}   \sum_{j=0}^n \frac{P_{r+1}}{(r+1)!j!(n-j)!}  x^j (1+x)^r w^j (w+u)^{n+r-j} \\
        & = & - \frac{1}{1-(w+u)} \left[\sum_{n=0}^{\infty} \frac{(y(w+u))^n}{n!} \right] \left[ \sum_{r=0}^\infty \frac{P_{r+1}}{(r+1)!} z^r \right] \left[\sum_{j=0}^\infty \frac{(xyw)^j}{j!} \right]   \\
        & = & - \frac{\ee^{y(w+u)+xyw}}{1-(w+u)} \frac{\ee^{z}}{\ee^{z}+1}.
    \end{Eqnarray*}
These expressions are then combined to form the full generating function
    \begin{Eqnarray*}
        2 h(u,w,y,x)
        & = & \frac{\ee^{yw}}{1-(w+u)} + \frac{\ee^{y(w+u)}}{1-(w+u)} + \frac{\ee^{yw(1+x)}}{1-(w+u)}+\frac{\ee^{y(w+u)+xyw}}{1-(w+u)}\\
        &&- \frac{\ee^{yw}}{1-(w+u)} \frac{\ee^{z}}{\ee^{z}+1} - \frac{\ee^{y(w+u)}}{1-(w+u)} \frac{\ee^{z}}{\ee^{z}+1}\\
        && - \frac{\ee^{yw(1+x)}}{1-(w+u)} \frac{\ee^{z}}{\ee^{z}+1} - \frac{\ee^{y(w+u)+xyw}}{1-(w+u)} \frac{\ee^{z}}{\ee^{z}+1}\\
        & = & \frac{\ee^{yw} + \ee^{y(w+u)} + \ee^{yw(1+x)} + \ee^{y(w+u)+xyw}}{1-(w+u)}  \left[ 1 - \frac{\ee^{z}}{\ee^{z}+1} \right] \\
        & = & \frac{1}{1-(w+u)} \left[\frac{\ee^{yw} + \ee^{y(w+u)} + \ee^{yw(1+x)} + \ee^{y(w+u)+xyw}}{\ee^{z} +1}\right].
    \end{Eqnarray*}
For further simplification, consider $g=2h(u,w,2y,x)(1-(w+u))$, letting $a=uy, b = wy, c = uxy$ and $d = wxy$,
    \begin{Eqnarray*}
        g&=& \frac{\ee^{2yw} + \ee^{2y(w+u)} + \ee^{2yw(1+x)} + \ee^{2y(w+u)+2xyw}}{\ee^{2yw+2ywx+2yu+2yux} +1} \\
        &=& \frac{\ee^{2b} + \ee^{2a+2b} + \ee^{2b + 2d} + \ee^{2a+2b+2d}}{\ee^{2a+2b+2d+2c} +1} \\
        &=& \frac{\ee^{-a +b -c -d} + \ee^{a +b -c -d} + \ee^{-a +b-c +d} + \ee^{a +b -c +d} }{\ee^{a+b+c+d}+\ee^{-a-b -c -d}}\\
        &=& \ee^{b-c} \frac{\ee^{-a -d} + \ee^{a -d} + \ee^{-a +d} + \ee^{a +d} }{\ee^{a+b+c+d}+\ee^{-a-b -c -d}}\\
        &=& \ee^{b-c} \frac{(\ee^{-a} + \ee^{a}) (\ee^{-d} +\ee^{d})}{\ee^{a+b+c+d}+\ee^{-a-b -c -d}}\\
        &=& 2 \ee^{b-c} \frac{\cosh(a)\cosh(d)}{\cosh(a+b+c+d)}.
    \end{Eqnarray*}
This brings us to the desired form of the generating function,
    \begin{equation*}
        \tag{\ref{eq:genfun_sol_pi}}
        h(u,w,y,x)  = \frac{\exp\left((wy-uxy)/2\right) }{1-(w+u)} \frac{\cosh(uy/2)\cosh(wxy/2)}{\cosh(y(u+w)(1+x)/2)},
    \end{equation*}
and completes the proof of Lemma~\ref{lem:genfun_pi}.

\end{document}